\title[Small exotic rational surfaces without 1- and 3-handles]{Small exotic rational surfaces\\without 1- and 3-handles}
\author[Kouichi Yasui]{Kouichi Yasui}
\date{July 23, 2008}
\address{Department~of~Mathematics, Graduate~School~of~Science, Osaka~University, Toyonaka, Osaka 560-0043, Japan}
\email{kyasui@cr.math.sci.osaka-u.ac.jp}
\subjclass[2000]{57R55, 57R65, 57N13}
\keywords{handle decomposition; rational blowdown; $1$-handle; small exotic $4$-manifold.}
\thanks{The author is partially supported by JSPS Research Fellowships for Young Scientists.}
\newtheorem{theorem}{Theorem}[section]
\newtheorem{proposition}[theorem]{Proposition}
\newtheorem{lemma}[theorem]{Lemma}
\newtheorem{corollary}[theorem]{Corollary}
\theoremstyle{definition}
\newtheorem{definition}[theorem]{Definition}
\newtheorem{remark}[theorem]{Remark}
\newtheorem{question}[theorem]{Question}
\newtheorem{ack}{Acknowledgement}
\begin{document}

\begin{abstract}
We give new rational blowdown constructions of exotic $\mathbf{CP}^2\# n\overline{\mathbf{C}\mathbf{P}^2}$ $(5\leq n\leq 9)$ without using elliptic fibrations. We also show that our 4-manifolds admit handle decompositions without 1- and 3-handles, for $7\leq n\leq 9$. A strategy for rational blowdown constructions of exotic $\mathbf{CP}^2\# n\overline{\mathbf{C}\mathbf{P}^2}$ $(1\leq n\leq 4)$ is also proposed.
\end{abstract}

\maketitle

\section{Introduction}
It is not known if every smooth $4$-manifold admits an exotic smooth structure. Various methods for constructing exotic smooth structures on small $4$-manifolds are currently in rapid progress. Park~\cite{P1}, Stipsicz-Szab\'{o}~\cite{SS}, Fintushel-Stern~\cite{FS2} and Park-Stipsicz-Szab\'{o}~\cite{PSS} constructed exotic smooth structures on $\mathbf{CP}^2\# n\overline{\mathbf{C}\mathbf{P}^2}$ $(5\leq n\leq 8)$ by using rational blowdowns as main tools. They used elliptic fibrations (and knot surgeries) to perform rational blowdowns. Akhmedov-Park~\cite{AP}, \cite{AP2} \textit{et.\ al.\ }later constructed exotic $\mathbf{CP}^2\# n\overline{\mathbf{C}\mathbf{P}^2}$ $(2\leq n\leq 4)$ without using rational blowdowns. 

However, it is still unknown whether or not $S^4$ and $\mathbf{CP}^2$ admit an exotic smooth structure. If such a structure exists, then every handle decomposition of it must contain 1- or 3-handles (cf.\ \cite{Y2}). 
On the contrary, many classical closed $4$-manifolds are known to admit neither $1$- nor $3$-handles (cf.~\cite{GS}, \cite{A1}, \cite{Y3}). 
Thus the following question seems to be reasonable: ``What is the smallest $n$ for which an exotic $\mathbf{CP}^2\# n\overline{\mathbf{C}\mathbf{P}^2}$ without $1$- and $3$-handles exists?''. In \cite{Y2}, we constructed an exotic $\mathbf{CP}^2\# 9\overline{\mathbf{C}\mathbf{P}^2}$ without $1$- and $3$-handles by using rational blowdowns and Kirby calculus. In \cite{A1}, Akbulut later proved that the elliptic surface $E(1)_{2,3}$, which is an exotic $\mathbf{CP}^2\# 9\overline{\mathbf{C}\mathbf{P}^2}$, has neither 1- nor 3-handles by using knot surgery and investigating a dual handle decomposition of $E(1)_{2,3}$. 

The purpose of this paper is two-fold. The first is to give new constructions of exotic $\mathbf{CP}^2\# n\overline{\mathbf{C}\mathbf{P}^2}$ $(5\leq n\leq 9)$ by using rational blowdowns, Kirby calculus and no elliptic fibrations. Our constructions give explicit procedures to draw handlebody pictures. In particular, our manifolds are the first examples in the following sense: 
\begin{theorem}
$(1)$ For $7\leq n\leq 9$, there exists a smooth $4$-manifold which is homeomorphic but not diffeomorphic to $\mathbf{CP}^2\# n\overline{\mathbf{C}\mathbf{P}^2}$ and has neither $1$- nor $3$-handles in a handle decomposition. \\
$(2)$ There exists a smooth $4$-manifold which is homeomorphic but not diffeomorphic to $\mathbf{CP}^2\# 6\overline{\mathbf{C}\mathbf{P}^2}$ and has no $1$-handles in a handle decomposition.
\end{theorem}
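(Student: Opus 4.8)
The plan is to treat the two assertions separately and to reduce each to an explicit handlebody computation built on the rational blowdown construction described above. For both parts I first invoke that construction to produce a closed, simply connected smooth $4$-manifold $X_n$ with the intersection form of $\mathbf{CP}^2\# n\overline{\mathbf{C}\mathbf{P}^2}$. Freedman's theorem then yields a homeomorphism $X_n\cong\mathbf{CP}^2\# n\overline{\mathbf{C}\mathbf{P}^2}$, while the non-diffeomorphism follows from Seiberg--Witten theory: since $X_n$ is obtained by rationally blowing down a configuration inside a manifold carrying a nontrivial Seiberg--Witten basic class, the rational blowdown formula of Fintushel--Stern guarantees that $X_n$ has nonvanishing Seiberg--Witten invariant, whereas the standard rational surface has vanishing invariant. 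Thus each $X_n$ is exotic, and it remains only to analyze its handle decomposition.

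The key reduction is an Euler characteristic count. Since $X_n$ is simply connected with $b_2=n+1$, we have $\chi(X_n)=n+3$, so any handle decomposition with $h_0=h_4=1$, $h_1=0$ and $h_2=n+1$ satisfies $n+3=1-0+(n+1)-h_3+1$, forcing $h_3=0$. Dually, if the $2$-handles are attached along a framed link $L\subset S^3$ (so that there are no $1$-handles), then $X_n$ minus its top handle has boundary the surgered manifold $S^3_L$, and the absence of $3$-handles is equivalent to $S^3_L=S^3$, so that a single $4$-handle closes the manifold. Hence for part $(1)$ it suffices to produce, for each $n\in\{7,8,9\}$, a framed link $L\subset S^3$ with $n+1$ components presenting $X_n$ and satisfying $S^3_L=S^3$; for part $(2)$ with $n=6$ it suffices to produce such an $L$ presenting $X_6$ even if $S^3_L\neq S^3$, since that only forces the appearance of some $3$-handles.

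To carry this out I start from the explicit handlebody of the ambient manifold used in the construction, in which the linear plumbing $C_p$ appears as a chain of framed unknots. I then perform the rational blowdown in Kirby calculus, deleting the handles of $\nu C_p$ and inserting the standard handle picture of the rational homology ball $B_p$, which contributes one $1$-handle and one $2$-handle. After this substitution I simplify by handle slides and cancellations. The decisive move is to cancel every $1$-handle introduced by $B_p$ (together with any $1$-handles already present) against a suitable $2$-handle, each $1$-handle being geometrically cancelled by a $2$-handle running over it exactly once. Whether this can be arranged depends on the ambient handle structure, and this is the step where different values of $n$ behave differently.

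The main obstacle is precisely this cancellation together with the subsequent boundary identification. Once all $1$-handles are removed we are left with a framed link $L\subset S^3$ with $n+1$ components, and I must recognize $S^3_L$. For $7\le n\le 9$ I expect the simplification to terminate with $S^3_L=S^3$, yielding a decomposition with one $0$-handle, $n+1$ $2$-handles and one $4$-handle, hence, by the count above, with no $1$- or $3$-handles; this proves $(1)$. For $n=6$ the $1$-handle cancellation still succeeds, but the resulting $S^3_L$ is not $S^3$, so only the absence of $1$-handles is guaranteed, which is exactly the content of $(2)$. Verifying these facts is a lengthy but routine sequence of Kirby moves; the genuinely delicate point is arranging the cancelling pairs and keeping control of $S^3_L$ throughout the calculus.
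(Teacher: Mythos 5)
Your outline follows the same architecture as the paper's actual proof---rationally blow down a copy of $C_p$ inside a rational surface, cancel the single $1$-handle of $B_p$ against a $2$-handle running over it geometrically once, verify simple connectivity, apply Freedman's theorem, and distinguish smooth structures via Seiberg--Witten theory---but it defers precisely the steps that constitute the proof, and one of them fails as stated. The missing idea is the input to everything else: you must exhibit, by explicit handle moves, decompositions of $\mathbf{CP}^2\#(3a+2)\overline{\mathbf{C}\mathbf{P}^2}$ and $\mathbf{CP}^2\#(3a+4)\overline{\mathbf{C}\mathbf{P}^2}$ that contain $C_{4a-9}$, resp.\ $C_{4a-7}$, positioned as in Figure~\ref{fig12}, i.e.\ with no $1$-handles elsewhere in the diagram, a controlled number of $3$-handles, and a $(-1)$-framed $2$-handle geometrically dual to the would-be $1$-handle of $B_p$, so that Lemma~\ref{without-handle} applies. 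This is the content of Proposition~\ref{prop:construction} and Corollary~\ref{cor:C_p}, and it is not ``routine Kirby calculus one expects to terminate'': the analogous configurations provably do \emph{not} exist for $a\geq 12$ (resp.\ $a\geq 10$), and their existence in the intermediate range is an open question (see the remark following Corollary~\ref{cor:C_p}). Your dichotomy---$S^3_L=S^3$ exactly when $7\leq n\leq 9$ but not when $n=6$---is asserted so as to match the theorem rather than derived; in the paper it comes from using two different families (the $n=7,8,9$ manifolds are $X'_{a,3}$, $3\leq a\leq 5$, built from Figure~\ref{fig7}, whose ambient decomposition has no $3$-handles, while the $n=6$ manifold is $X_{6,3}$, built from Figure~\ref{fig6}, whose two $3$-handles survive the blowdown). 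Two smaller gaps: simple connectivity of a rational blowdown is not automatic (the operation can create torsion in $H_1$), and the paper needs the one-$1$-handle structure plus Lemma~\ref{lem:simply connected} to establish it; also, when $S^3_L\neq S^3$ you still need $S^3_L$ to be a connected sum of copies of $S^1\times S^2$ before $3$- and $4$-handles can close up the manifold.

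The step that would fail outright is the Seiberg--Witten argument. The ambient manifolds are rational surfaces $\mathbf{CP}^2\# k\overline{\mathbf{C}\mathbf{P}^2}$ with $k\geq 11$; they admit positive scalar curvature metrics and carry no nontrivial Seiberg--Witten basic classes, so your premise that the blowdown takes place ``inside a manifold carrying a nontrivial Seiberg--Witten basic class'' is false, and the Fintushel--Stern gluing formula applied to identically vanishing invariants yields nothing. What saves the argument (Lemma~\ref{lem:5.1}) is that $b_2^->9$ makes the small-perturbation invariant of the ambient manifold chamber-dependent: starting from $SW_{R_k,PD(h)}\equiv 0$, the wall-crossing formula gives $SW_{R_k,H}(\pm\tilde{K})=\pm 1$ for a period point $H$ chosen orthogonal to $H_2(C_p;\mathbf{R})$; the lifting criterion (Corollary~\ref{cor:4.3}) shows $\tilde{K}$ descends to the blowdown, and Theorem~\ref{nthm:4.1} transfers the value. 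Only because $b_2^-(X_n)\leq 9$ is the resulting invariant of $X_n$ chamber-independent, hence a genuine diffeomorphism invariant distinguishing $X_n$ from the standard $\mathbf{CP}^2\# n\overline{\mathbf{C}\mathbf{P}^2}$. Without this chamber/wall-crossing structure your non-diffeomorphism claim has no proof.
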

In general, it is difficult to show that exotic 4-manifolds admit neither 1- nor 3-handles. See, for example, \cite{A1}. However, in \cite{Y2} and this paper, we constructed our exotic rational surfaces so that their $1$- and $3$-handles naturally disappear. Thus it is easy to elliminate 1- and 3-handles of our handlebodies. 

The second purpose is to propose a strategy for rational blowdown constructions of exotic $\mathbf{CP}^2\# n\overline{\mathbf{C}\mathbf{P}^2}$ $(1\leq n\leq 9)$, though the author could not carry out the strategy for $1\leq n\leq 4$. The author thought of the strategy in connection with a natural question on handle decompositions of $\mathbf{CP}^2\# 2\overline{\mathbf{C}\mathbf{P}^2}$. 
\begin{ack}
The author would like to thank his adviser Hisaaki Endo and Kazunori Kikuchi for their heartfelt encouragements and discussions. 
This work is based on a part of the author's master thesis in 2006.
\end{ack}
\section{Rational blowdown}
In this section we review the rational blowdown introduced 
by Fintushel-Stern \cite{FS1}. For the procedure to draw handlebody diagrams of rational blowdowns, see also Gompf-Stipsicz \cite[Section~8.5]{GS}.

Let $C_p$ and $B_p$ be the smooth $4$-manifolds defined by handlebody diagrams in Figure~\ref{C_p}, and $u_1,\dots,u_{p-1}$ elements of $H_2(C_p;\mathbf{Z})$ given by corresponding $2$-handles in the figure such that $u_i\cdot u_{i+1}=+1$ $(1\leq i\leq p-2)$.
The boundary $\partial C_p$ of $C_p$ is diffeomorphic to the lens space $L(p^2,p-1)$ and to the boundary $\partial B_p$ of $B_p$. 
\begin{figure}[htbp]
\begin{center}
\includegraphics[width=3.5in]{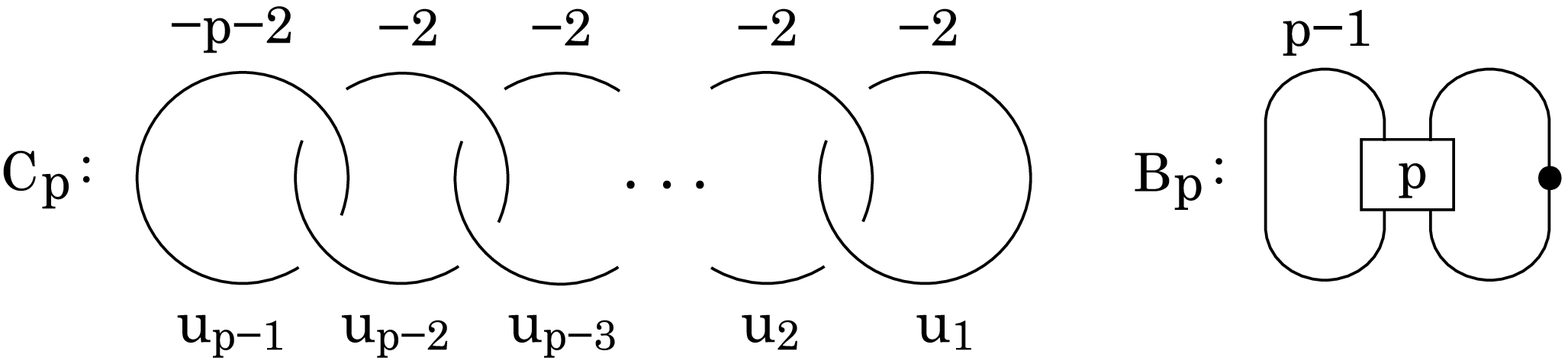}
\caption{}
\label{C_p}
\end{center}
\end{figure}

Suppose that $C_p$ embeds in a smooth $4$--manifold $X$. 
Let $X_{(p)}$ be the smooth $4$-manifold obtained from $X$ by removing $C_p$ and gluing $B_p$ along the boundary. The smooth $4$-manifold $X_{(p)}$ is called the rational blowdown of $X$ along $C_p$. Note that $X_{(p)}$ is uniquely determined up to diffeomorphism by a fixed pair $(X,C_p)$ (see Fintushel-Stern~\cite{FS1}). 
This operation preserves $b_2^+$, decreases $b_2^-$, may create torsions in the first homology group.
\section{Construction}\label{section:construction}
In this section we give constructions of exotic $\mathbf{CP}^2\# n\overline{\mathbf{C}\mathbf{P}^2}$ $(5\leq n\leq 9)$. In handlebody diagrams, we write the second homology classes given by $2$-handles, instead of usual framings. Note that the square of the homology class given by a 2-handle is equal to the usual framing. We do not draw (whole) handlebody diagrams of exotic rational surfaces and the other manifolds appeared in the following construction. However, one can draw whole diagrams, following the procedures in this section. 

Let $h,e_1,e_2,\dots,e_n$ be a canonical orthogonal basis of $H_2(\mathbf{CP}^2\# n\overline{\mathbf{C}\mathbf{P}^2};\mathbf{Z})=H_2(\mathbf{CP}^2;\mathbf{Z})\oplus _n H_2(\overline{\mathbf{C}\mathbf{P}^2};\mathbf{Z})$ such that $h^2=1$ and $e_1^2=e_2^2=\dots=e_n^2=-1$. We begin with the proposition below. This proposition is a key of our constructions. 
\begin{proposition}[cf.~\cite{Y2}]
$(1)$ For $a\geq 1$, the complex projective plane $\mathbf{CP}^2$ admits the handle decomposition in Figure~$\ref{fig2}$. \\
$(2)$ For $a\geq 1$, the $4$-manifold $\mathbf{CP}^2\# 2\overline{\mathbf{C}\mathbf{P}^2}$ admits the handle decomposition in Figure~$\ref{fig3}$.
\begin{figure}[ht!]
\begin{center}
\includegraphics[width=3.0in]{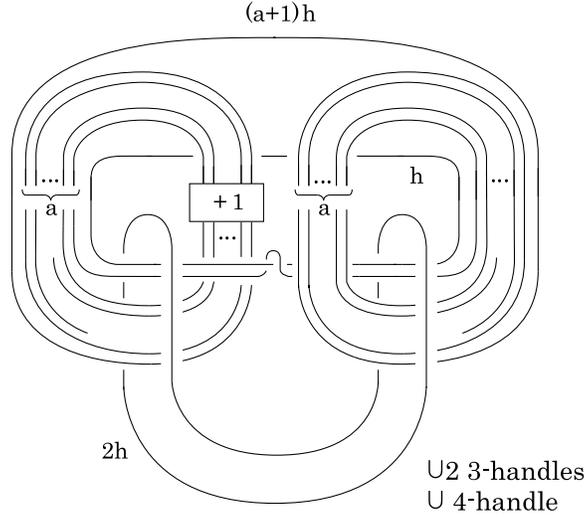}
\caption{$\mathbf{CP}^2$ $(a\geq 1)$}
\label{fig2}
\end{center}
\end{figure}
\begin{figure}[ht!]
\begin{center}
\includegraphics[width=3.0in]{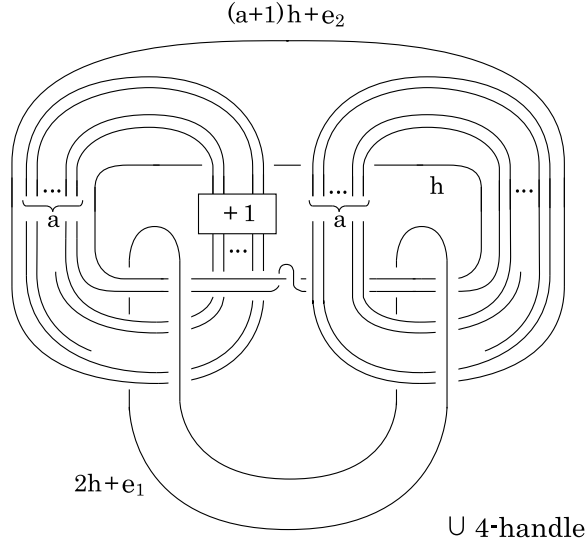}
\caption{$\mathbf{CP}^2\# 2\overline{\mathbf{C}\mathbf{P}^2}$ $(a\geq 1)$}
\label{fig3}
\end{center}
\end{figure}
\end{proposition}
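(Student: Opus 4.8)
The plan is to prove both statements by explicit Kirby calculus, starting from a minimal handle decomposition and tracking the homology class carried by each $2$-handle throughout. Since the two pictures depend on the parameter $a$, I would organize the argument for part~$(1)$ as an induction on $a$, and then derive part~$(2)$ from part~$(1)$ by two blow-ups. Throughout, the point is to record not merely the diffeomorphism type but the precise homology labels of the $2$-handles, since it is these labels that make the family usable for locating the linear plumbings $C_p$ after further blow-ups.

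First I would establish the base case $a=1$ of part~$(1)$. Starting from the standard minimal decomposition of $\mathbf{CP}^2$ — a single $2$-handle attached along an unknot with framing $+1$, carrying the class $h$, and capped off by a $4$-handle — I would introduce cancelling handle pairs and perform blow-ups and blow-downs to manufacture the chain of $2$-handles appearing in Figure~\ref{fig2}, taking care that $b_2$ remains equal to $1$ so that the total space stays $\mathbf{CP}^2$. Each intermediate move (handle slide, isotopy, blow-up or blow-down) preserves the diffeomorphism type, so exhibiting the target picture is tantamount to exhibiting the sequence of moves that reaches it.

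For the inductive step I would identify a fixed block in the diagram for parameter $a+1$ that can be stripped off by a short sequence of handle slides followed by a single blow-down, transforming the $(a+1)$-diagram into the $a$-diagram with the homology labels shifting accordingly. Since blow-down preserves the diffeomorphism type and, by the inductive hypothesis, the $a$-diagram represents $\mathbf{CP}^2$, the $(a+1)$-diagram does as well. Part~$(2)$ I would then obtain directly from part~$(1)$: blowing up the configuration of Figure~\ref{fig2} twice at suitably chosen points introduces two $(-1)$-framed $2$-handles carrying the classes $e_1$ and $e_2$, and after a short sequence of handle slides the result is precisely the decomposition of $\mathbf{CP}^2\#2\overline{\mathbf{C}\mathbf{P}^2}$ shown in Figure~\ref{fig3}.

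The main obstacle will be the bookkeeping in the inductive step: one must verify that the chosen handle slides genuinely isolate a blow-downable $(-1)$-sphere, and that after the blow-down every framing and every homology label matches the $a$-diagram exactly. Controlling how the classes of the $2$-handles transform under each slide — rather than only the underlying diffeomorphism type — is the delicate point, and it is exactly this control that is needed later to embed the configurations $C_p$ and carry out the rational blowdowns.
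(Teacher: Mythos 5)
There is a genuine error at the core of your plan: blow-ups and blow-downs do \emph{not} preserve the diffeomorphism type of a closed $4$-manifold, only handle slides, isotopies, and creation/annihilation of cancelling handle pairs do. Blowing down a $(-1)$-framed unknot in a diagram of $X$ yields a diagram of a manifold $X'$ with $X\cong X'\#\overline{\mathbf{C}\mathbf{P}^2}$, so $b_2$ drops by one. Your inductive step is therefore impossible as stated: if handle slides followed by a single blow-down carried the $(a+1)$-diagram to the $a$-diagram, then the $(a+1)$-diagram would represent $\mathbf{CP}^2\#\overline{\mathbf{C}\mathbf{P}^2}$ rather than $\mathbf{CP}^2$; since both diagrams are supposed to represent $\mathbf{CP}^2$ (with $b_2=1$), no such sequence of moves can exist, and the sentence ``blow-down preserves the diffeomorphism type'' is false. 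The same flaw infects your base case: pairing each blow-up with a blow-down so that ``$b_2$ remains equal to $1$'' does not keep the total space $\mathbf{CP}^2$; it only produces a manifold $M$ with $M\#\overline{\mathbf{C}\mathbf{P}^2}\cong\mathbf{CP}^2\#\overline{\mathbf{C}\mathbf{P}^2}$, and connected sum with $\overline{\mathbf{C}\mathbf{P}^2}$ is not a cancellable operation in the smooth category --- the failure of such cancellation (exotic smooth structures on rational surfaces) is precisely the subject of this paper, so this inference cannot be used in its foundations.

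The paper's proof avoids blow-ups and blow-downs entirely in this proposition: it starts from the standard diagram of $\mathbf{CP}^2$ (a single $+1$-framed unknot), introduces cancelling $2$-handle/$3$-handle pairs, slides handles, and then produces the parameter $a$ by repeating one handle slide, following the explicit figure sequences of \cite{Y2}; part $(2)$ is obtained by running the analogous slides starting from the standard diagram of $\mathbf{CP}^2\#2\overline{\mathbf{C}\mathbf{P}^2}$, not by blowing up Figure~\ref{fig2}. Note also a structural point your sketch never confronts: since $b_2(\mathbf{CP}^2)=1$, a diagram of $\mathbf{CP}^2$ with several $2$-handles and no $1$-handles must contain $3$-handles (Figure~\ref{fig2} has two, as one can read off from Lemma~\ref{without-handle} and Proposition~\ref{prop:handle}), which is exactly why cancelling $2$-handle/$3$-handle pairs --- and not blow-up/blow-down pairs, which create no $3$-handles --- are the right mechanism for manufacturing the chain of $2$-handles. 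Your idea of passing from Figure~\ref{fig2} to Figure~\ref{fig3} by two blow-ups is at least type-correct, since blow-ups do turn $\mathbf{CP}^2$ into $\mathbf{CP}^2\#2\overline{\mathbf{C}\mathbf{P}^2}$, but it cannot salvage the argument while part $(1)$ rests on the false invariance under blow-downs.
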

\begin{proof}
(1) Figure~\ref{fig4} is a standard diagram of $\mathbf{CP}^2$. Introduce 2-handle/3-handle pairs and slide handles as in~\cite[Figure $9\sim 14$]{Y2}. Repeat a handle slide as in ~\cite[Figure $15\sim 17$]{Y2}. An isotopy now gives Figure~\ref{fig2}. Notice that the $a=3$ case of Figure~\ref{fig2} is isotopic to ~\cite[Figure 17]{Y2}.\\
(2) Figure~\ref{fig5} is a standard diagram of $\mathbf{CP}^2\# 2\overline{\mathbf{C}\mathbf{P}^2}$. Handle slides similar to the proof of (1) give Figure~\ref{fig3}.
\begin{figure}[h!]
\begin{minipage}{.45\linewidth}
\begin{center}
\includegraphics[width=1.2in]{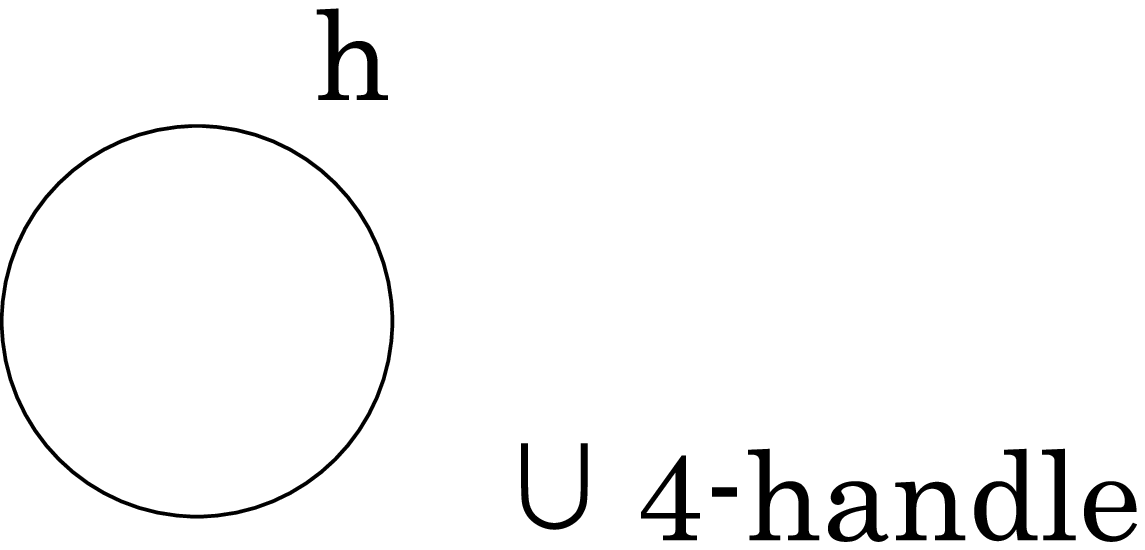}
\caption{$\mathbf{CP}^2$}
\label{fig4}
\end{center}
\end{minipage}
\begin{minipage}{.45\linewidth}
\begin{center}
\includegraphics[width=2.2in]{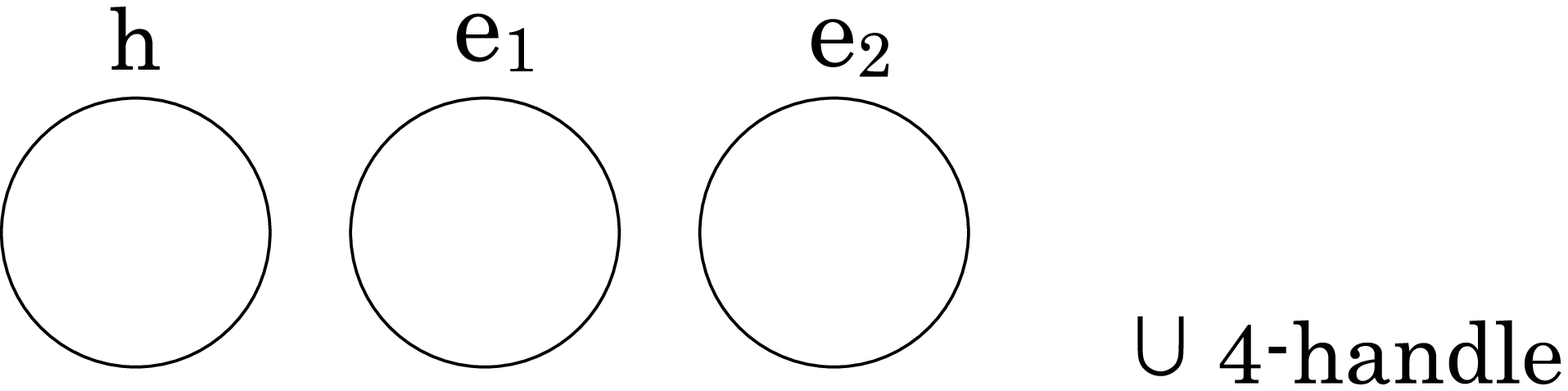}
\caption{$\mathbf{CP}^2\# 2\overline{\mathbf{C}\mathbf{P}^2}$}
\label{fig5}
\end{center}
\end{minipage}
\end{figure}
\end{proof}
\begin{proposition}\label{prop:construction}
$(1)$ $\mathbf{CP}^2\# (3a+2)\overline{\mathbf{C}\mathbf{P}^2}$ $(a\geq 3)$ admits a handle decomposition as in Figure~$\ref{fig6}$.
\begin{figure}[h!]
\begin{center}
\includegraphics[width=3.5in]{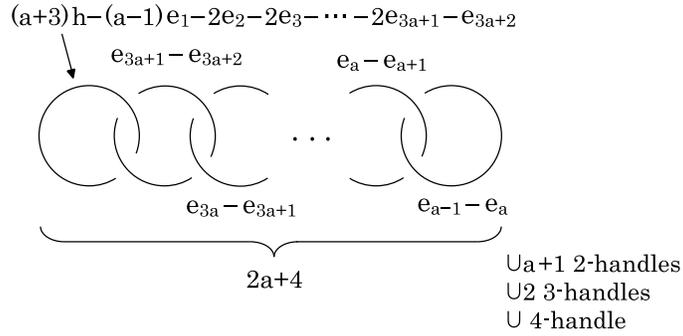}
\caption{$\mathbf{CP}^2\# (3a+2)\overline{\mathbf{C}\mathbf{P}^2}$ $(a\geq 3)$}
\label{fig6}
\end{center}
\end{figure}\\
$(2)$ $\mathbf{CP}^2\# (3a+4)\overline{\mathbf{C}\mathbf{P}^2}$ $(a\geq 3)$ admits a handle decomposition as in Figure~$\ref{fig7}$.
\begin{figure}[h!]
\begin{center}
\includegraphics[width=3.5in]{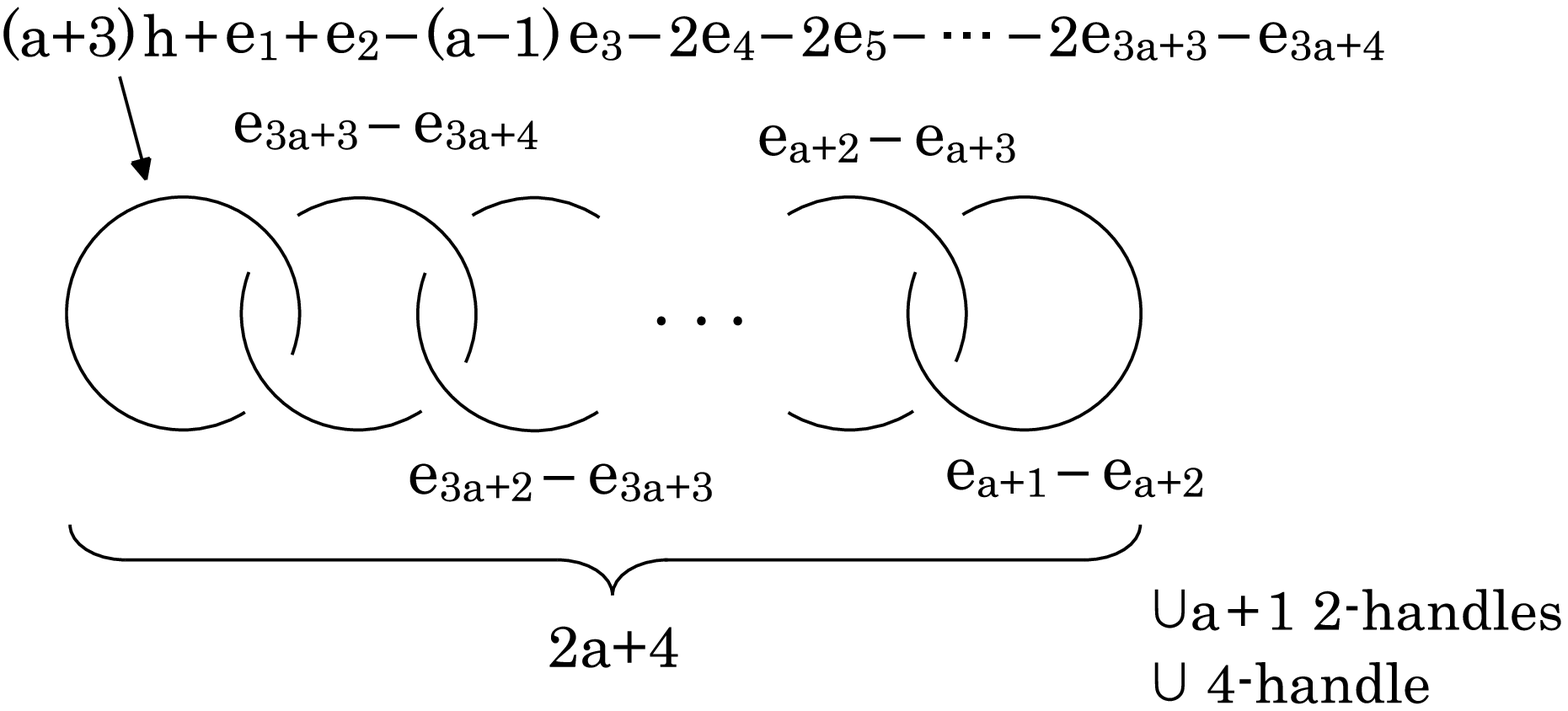}
\caption{$\mathbf{CP}^2\# (3a+4)\overline{\mathbf{C}\mathbf{P}^2}$ $(a\geq 3)$}
\label{fig7}
\end{center}
\end{figure}
\end{proposition}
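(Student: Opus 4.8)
The plan is to produce both decompositions directly from the two pictures furnished by the previous proposition, exploiting the fact that each target differs from its base manifold by exactly $3a+2$ copies of $\overline{\mathbf{C}\mathbf{P}^2}$. For part $(1)$, $\mathbf{CP}^2\#(3a+2)\overline{\mathbf{C}\mathbf{P}^2}$ has $b_2=3a+3$ whereas the decomposition of $\mathbf{CP}^2$ in Figure~\ref{fig2} has $b_2=1$; the difference of $3a+2$ is precisely the number of blow-ups needed. So I would obtain Figure~\ref{fig6} from Figure~\ref{fig2} by performing $3a+2$ blow-ups, each one a connected sum with $\overline{\mathbf{C}\mathbf{P}^2}$ that introduces a new $-1$-framed meridional unknot carrying a class $e_i$. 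The role of the parameter $a$ in Figure~\ref{fig2} is exactly to lay out the $2$-handles in a periodic pattern into which these blow-ups can be inserted uniformly -- three per period together with two fixed extra blow-ups, which is what produces the count $3a+2$.

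Concretely, I would first fix the blow-up sites along the strands and crossings of Figure~\ref{fig2} so that each exceptional sphere links the existing handles in a prescribed way. I would then slide the original and exceptional $2$-handles over one another to normalise the framings and the homology classes. All of this bookkeeping lives in $H_2$: every slide replaces a handle's class by its sum with or difference from another, and the aim is to arrange the classes so that the chain of spheres drawn in Figure~\ref{fig6} -- in particular the linear subchain with $u_i\cdot u_{i+1}=+1$ that later supports a rational blowdown along $C_p$ -- appears on the nose. I would close part $(1)$ by confirming that the total handle count and the attaching data agree with Figure~\ref{fig6}, keeping careful track of the $3$-handles inherited from the $2$-handle/$3$-handle pairs of Figure~\ref{fig2}.

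For part $(2)$ I would run the same argument verbatim with Figure~\ref{fig3} in place of Figure~\ref{fig2}. Since $\mathbf{CP}^2\#(3a+4)\overline{\mathbf{C}\mathbf{P}^2}$ again exceeds $\mathbf{CP}^2\#2\overline{\mathbf{C}\mathbf{P}^2}$ by $3a+2$ copies of $\overline{\mathbf{C}\mathbf{P}^2}$, the identical pattern of $3a+2$ blow-ups and the same family of handle slides should yield Figure~\ref{fig7}; the two $\overline{\mathbf{C}\mathbf{P}^2}$ summands already present in Figure~\ref{fig3} are what account for the structural difference between Figure~\ref{fig7} and Figure~\ref{fig6}.

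I expect the genuine difficulty to be the Kirby-calculus bookkeeping rather than any conceptual point: choosing the blow-up sites and the precise sequence of slides so that all framings and classes match Figures~\ref{fig6} and~\ref{fig7} uniformly in $a$. Because the diagrams are periodic in $a$, the cleanest route is probably induction on $a$: verify the base case $a=3$ by hand, where Figure~\ref{fig2} is isotopic to \cite[Figure 17]{Y2}, and then show that raising $a$ by one amounts to inserting three further blow-ups together with a fixed block of handle slides, while leaving the remainder of the diagram untouched. Checking that this inductive step really is a local modification -- and that it faithfully reproduces the next instance of Figure~\ref{fig6} (resp.\ Figure~\ref{fig7}) -- is the delicate part.
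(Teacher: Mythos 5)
Your proposal takes essentially the same route as the paper: the paper's proof also starts from Figure~\ref{fig2} (resp.\ Figure~\ref{fig3}), performs the $3a+2$ blow-ups in stages interleaved with isotopies and handle slides, and arrives at Figure~\ref{fig6} (resp.\ Figure~\ref{fig7}). The only organizational difference is that the paper carries out the Kirby-calculus bookkeeping uniformly in $a$ through an explicit chain of intermediate diagrams (Figures~\ref{fig8}--\ref{fig11}, passing through $\mathbf{CP}^2\#(a-2)\overline{\mathbf{C}\mathbf{P}^2}$ and $\mathbf{CP}^2\#(3a+1)\overline{\mathbf{C}\mathbf{P}^2}$) rather than by induction on $a$.
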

\begin{proof}
(1) Start with Figure~\ref{fig2}. Blow up as in Figure~\ref{fig8}. An isotopy gives Figure~\ref{fig9}. Isotope and slide a 2-handle as in Figure~\ref{fig10}. Blowing ups make the first diagram of Figure~\ref{fig11}. Handle slides give the second diagram of Figure~\ref{fig11}. We now obtain the last diagram of Figure~\ref{fig11} by blowing up. This diagram clearly provides us Figure~\ref{fig6}.\\
(2) Start with Figure~\ref{fig3}. Blowing ups and handle slides similar to the proof of (1) give Figure~\ref{fig7}.
\begin{figure}[h!]
\begin{center}
\includegraphics[width=4.0in]{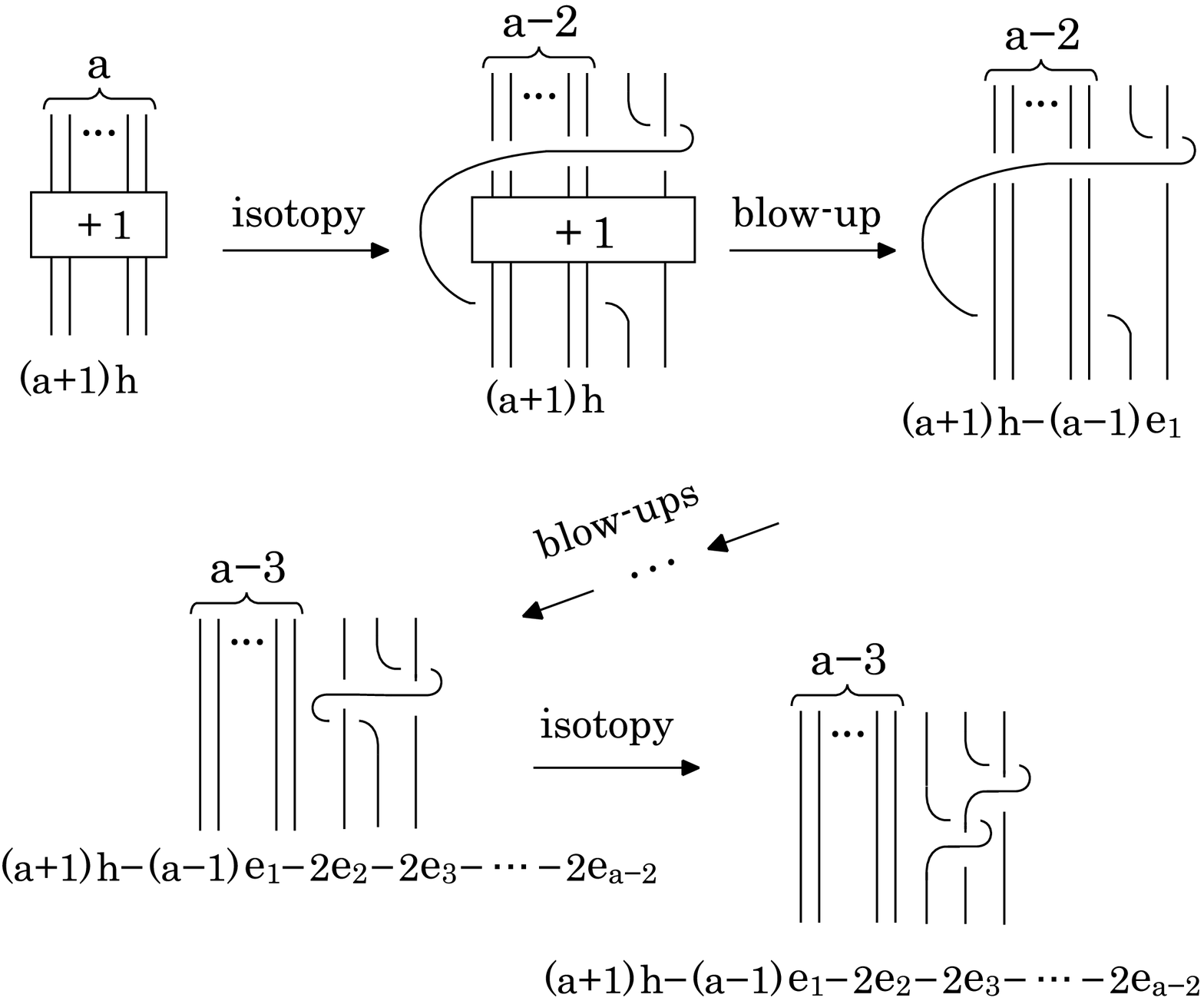}
\caption{blow-ups of $\mathbf{CP}^2$ $(a\geq 3)$}
\label{fig8}
\end{center}
\end{figure}
\begin{figure}[h!]
\begin{center}
\includegraphics[width=2.3in]{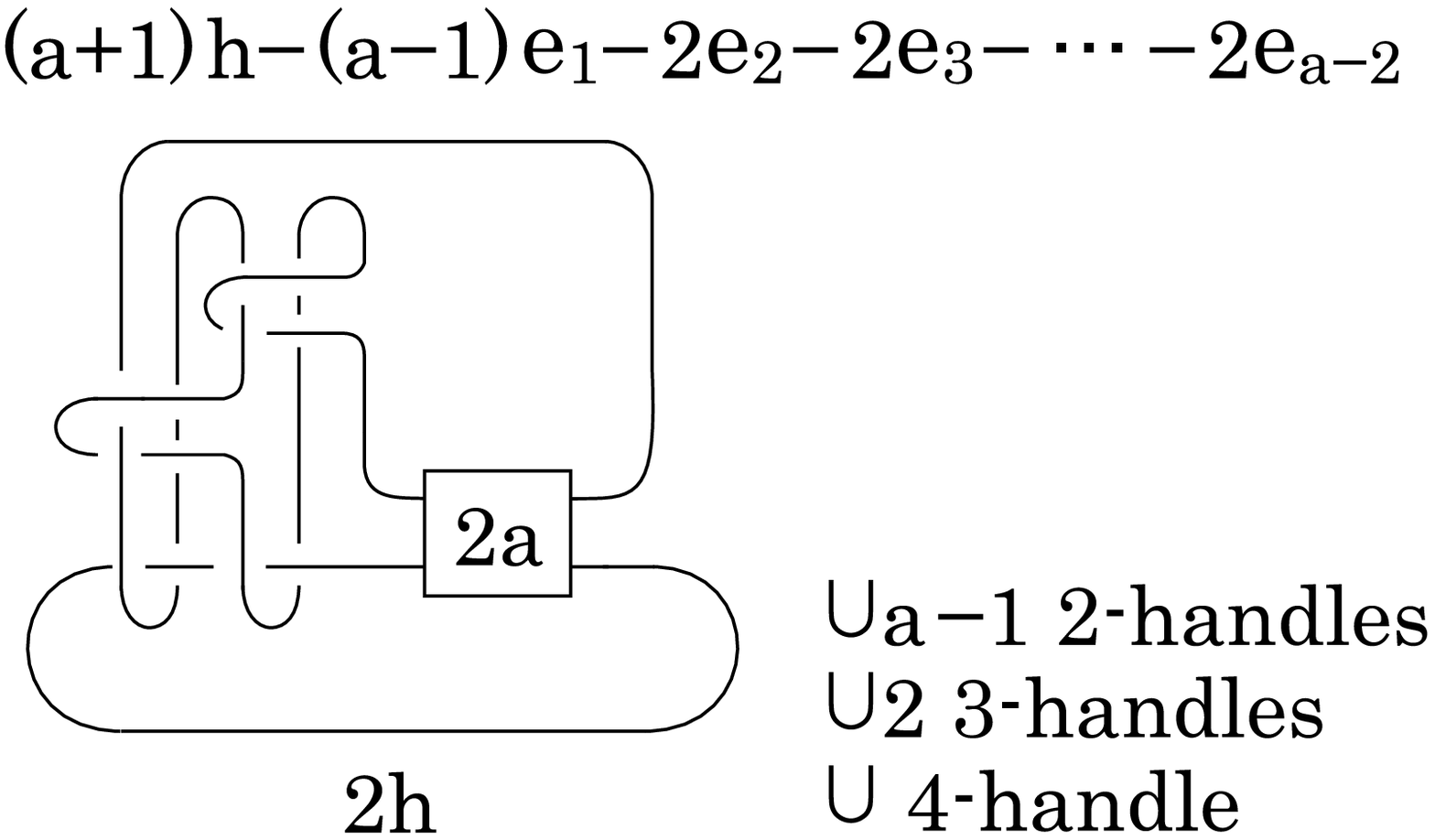}
\caption{$\mathbf{CP}^2\# (a-2)\overline{\mathbf{C}\mathbf{P}^2}$ $(a\geq 3)$}
\label{fig9}
\end{center}
\end{figure}
\begin{figure}[h!]
\begin{center}
\includegraphics[width=4.8in]{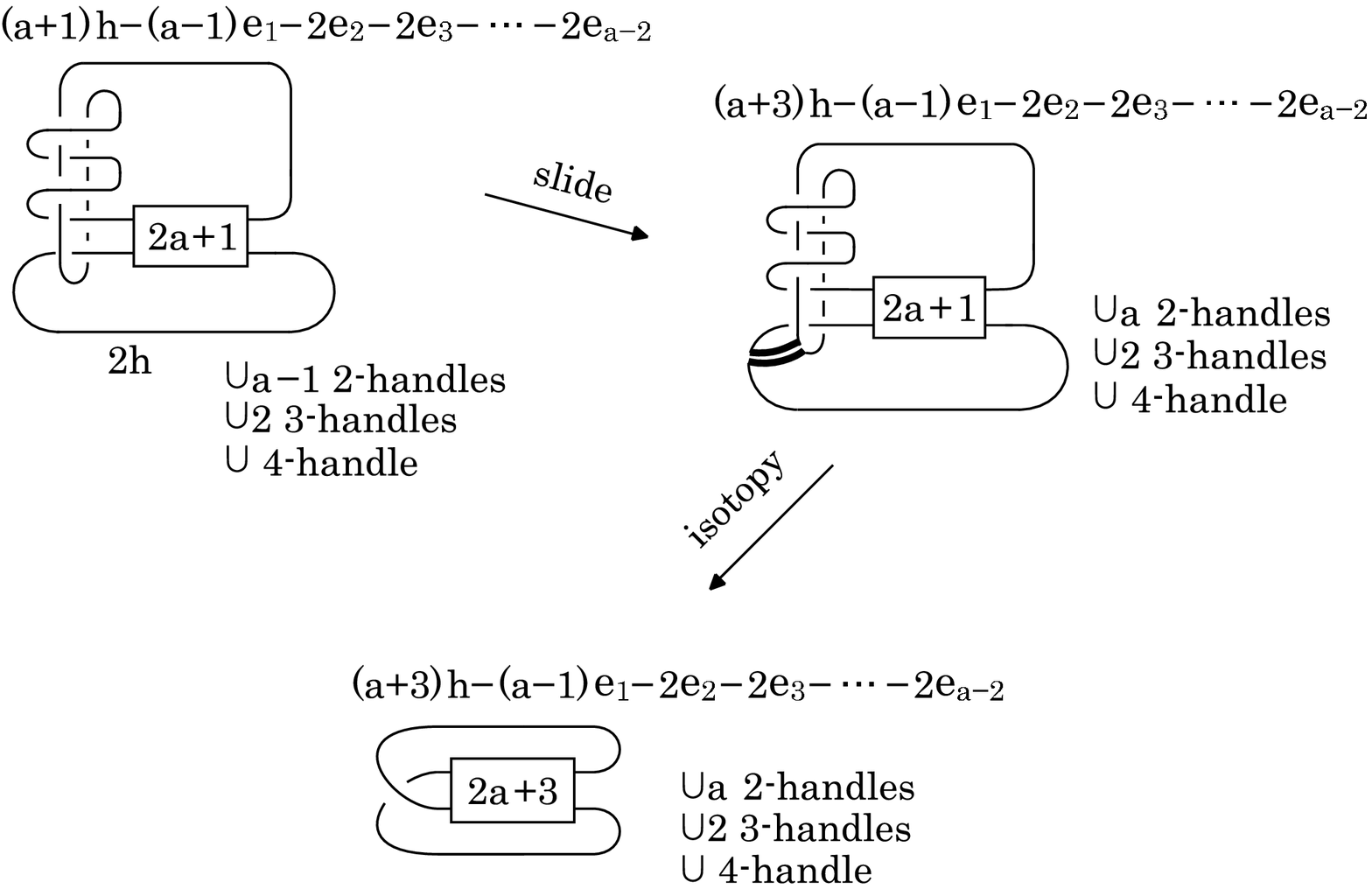}
\caption{$\mathbf{CP}^2\# (a-2)\overline{\mathbf{C}\mathbf{P}^2}$ $(a\geq 3)$}
\label{fig10}
\end{center}
\end{figure}
\begin{figure}[h!]
\begin{center}
\includegraphics[width=4.8in]{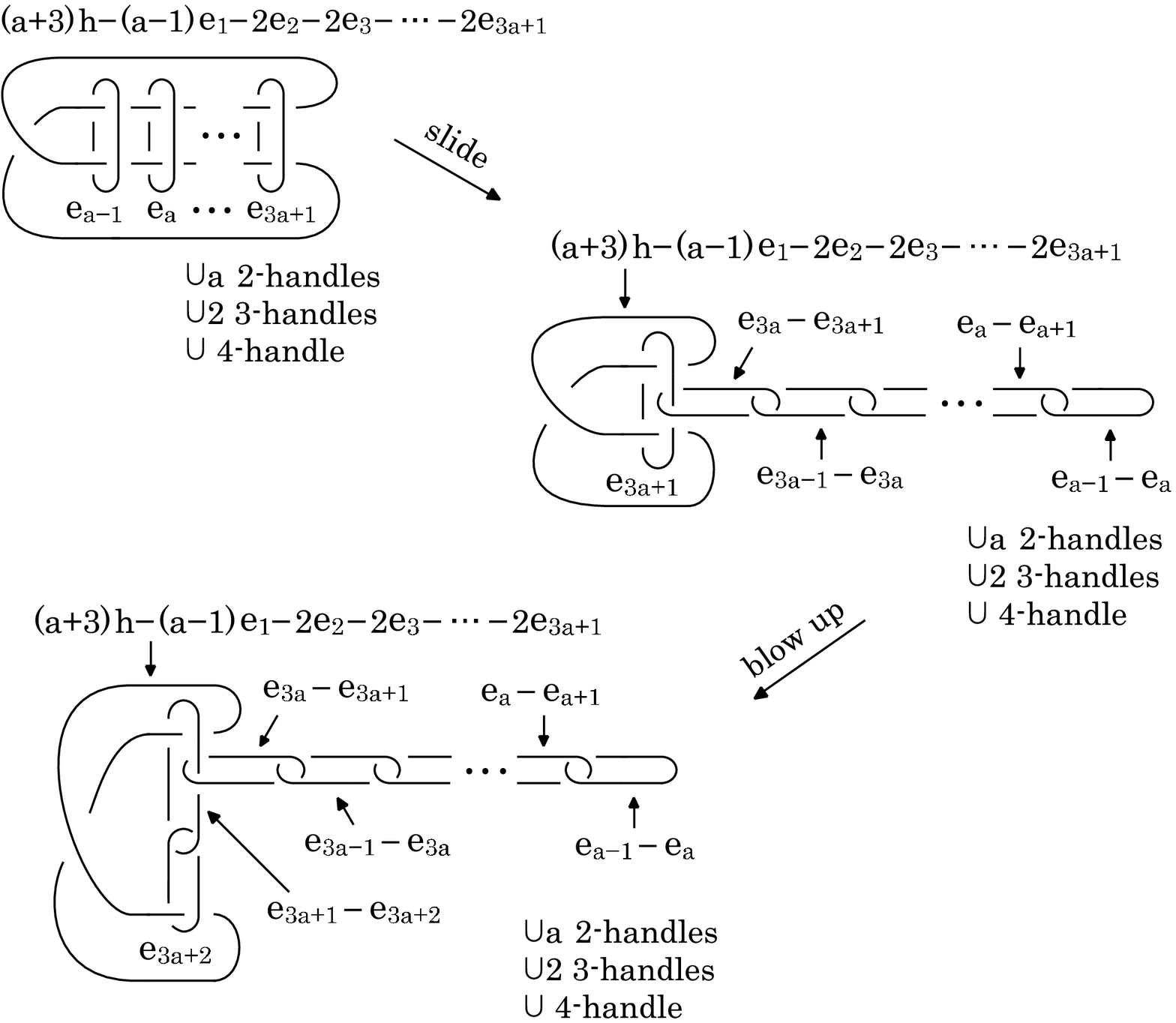}
\caption{handle slides and a blow-up of $\mathbf{CP}^2\# (3a+1)\overline{\mathbf{C}\mathbf{P}^2}$ $(a\geq 3)$}
\label{fig11}
\end{center}
\end{figure}
\end{proof}
We can find $C_p$ in Figure~\ref{fig6} and \ref{fig7}: 
\begin{corollary}\label{cor:C_p}
$(1)$ $\mathbf{CP}^2\# (3a+2)\overline{\mathbf{C}\mathbf{P}^2}$ $(3\leq a\leq 7)$ contains a copy of $C_{4a-9}$ such that the elements $u_1,u_2,\dots,u_{4a-10}$ of $H_2(C_{4a-9};\mathbf{Z})$ in $H_2(\mathbf{CP}^2\# (3a+2)\overline{\mathbf{C}\mathbf{P}^2};\mathbf{Z})$ satisfy
\begin{align*}
 &u_i=e_{12-a+i}-e_{13-a+i}\: (1\leq i\leq 4a-11)\: \text{and}\\
 &u_{4a-10}=(a+3)h-(a-1)e_1-2e_2-2e_3-\dots-2e_{3a+1}-e_{3a+2}. 
\end{align*}
$(2)$ $\mathbf{CP}^2\# (3a+4)\overline{\mathbf{C}\mathbf{P}^2}$ $(3\leq a\leq 6)$ contains a copy of $C_{4a-7}$ such that the elements $u_1,u_2,\dots,u_{4a-8}$ of $H_2(C_{4a-7};\mathbf{Z})$ in $H_2(\mathbf{CP}^2\# (3a+4)\overline{\mathbf{C}\mathbf{P}^2};\mathbf{Z})$ satisfy 
\begin{align*}
 &u_i=e_{12-a+i}-e_{13-a+i}\: (1\leq i\leq 4a-9)\: \text{and}\\
 &u_{4a-8}=(a+3)h+e_1+e_2-(a-1)e_3-2e_4-2e_5-\dots-2e_{3a+3}-e_{3a+4}. 
\end{align*}
\end{corollary}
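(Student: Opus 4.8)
The plan is to locate the chain of $2$-handles comprising $C_p$ directly inside the handle decompositions produced in Proposition \ref{prop:construction}, and then to verify that their homology classes satisfy the required intersection pattern. Recall that $C_p$ is built from a linear chain of $2$-handles whose generators $u_1,\dots,u_{p-1}$ satisfy $u_i\cdot u_{i+1}=+1$ and whose self-intersections are prescribed by the diagram in Figure \ref{C_p}. So the task splits into two parts: (a) exhibit $p-1$ of the $2$-handles in Figure \ref{fig6} (resp.\ Figure \ref{fig7}) that form such a linear plumbing, and (b) compute the classes they represent in the canonical basis $h,e_1,\dots,e_n$.

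First I would treat the ``generic'' part of the chain, namely the handles $u_i=e_{12-a+i}-e_{13-a+i}$ for $1\le i\le 4a-11$ (resp.\ $4a-9$). These are difference classes of exceptional spheres, and I would read them off directly from the blow-ups performed in the proof of Proposition \ref{prop:construction}: each such $2$-handle is a $(-2)$-sphere, and consecutive ones share exactly one exceptional class $e_k$, which immediately forces $u_i^2=-2$ and $u_i\cdot u_{i+1}=+1$ by orthogonality of the canonical basis. The self-intersections $-2,-2,\dots$ must match the interior framings demanded by the $C_p$ diagram, and the telescoping structure of these difference classes makes the intersection calculus essentially automatic. The main bookkeeping here is to confirm that the index ranges are correct, i.e.\ that exactly the advertised number of such handles appears and that they are indexed so as to abut the final, exceptional handle.

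The delicate step is the last generator: $u_{4a-10}=(a+3)h-(a-1)e_1-2e_2-\dots-2e_{3a+1}-e_{3a+2}$ (resp.\ the analogous class in part (2)). I would verify two things about it. First, that $u_{4a-10}\cdot u_{4a-11}=+1$: since $u_{4a-11}=e_{a+1}-e_{a+2}$ meets the long class only through its $e_{a+2}$-coefficient, this reduces to checking a single entry of the coefficient vector, which should be $\pm 1$ against the adjacent difference class. Second, that $u_{4a-10}^2$ equals the framing prescribed at the corresponding end of the $C_p$ chain; this is the one genuine computation, namely $(a+3)^2-(a-1)^2-4(3a)-1$, and I would confirm it yields the correct self-intersection for the terminal handle of $C_{4a-9}$. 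I expect this self-intersection/adjacency check on the long class to be the main obstacle, both because it is the only place where a nontrivial arithmetic identity in $a$ must come out exactly right and because one must correctly identify which vertex of the linear $C_p$ diagram the long handle occupies.

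Finally, I would confirm the ranges $3\le a\le 7$ in part (1) and $3\le a\le 6$ in part (2): these are exactly the constraints ensuring that all the index subscripts $e_{12-a+i},\dots,e_{3a+2}$ remain within $\{1,\dots,n\}$ and that the chain length $p-1$ is positive, so that the copy of $C_{4a-9}$ (resp.\ $C_{4a-7}$) genuinely fits inside the $n$-fold blow-up. Establishing that the linear plumbing just described is precisely the diagram defining $C_p$ for the stated value of $p$—matching both the length of the chain and all framings to Figure \ref{C_p}—completes the identification and hence the corollary.
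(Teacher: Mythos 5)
Your overall plan---locating the $C_p$ chain among the $2$-handles of the decompositions in Proposition~\ref{prop:construction} and checking framings and adjacencies---is exactly how the paper obtains the corollary (the paper offers nothing beyond ``we can find $C_p$ in Figure~\ref{fig6} and \ref{fig7}''), and your key arithmetic identity $(a+3)^2-(a-1)^2-4(3a)-1=-(4a-7)=-(p+2)$ for $p=4a-9$ is correct. However, two of your specific claims are wrong. First, the indexing of the last difference class: from $u_i=e_{12-a+i}-e_{13-a+i}$ one gets $u_{4a-11}=e_{3a+1}-e_{3a+2}$, not $e_{a+1}-e_{a+2}$, and this class meets the long class in \emph{two} coordinates, whose coefficients in $u_{4a-10}$ are $-2$ (on $e_{3a+1}$) and $-1$ (on $e_{3a+2}$). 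The intersection number is $-\bigl[(-2)(+1)+(-1)(-1)\bigr]=2-1=+1$; a ``single entry'' check as you describe would return $+2$ or $-1$, so that step, as written, fails. Both entries are needed, and the analogous two-coordinate check is also what makes the other differences $e_j-e_{j+1}$ ($2\le j\le 3a$) \emph{orthogonal} to the long class, which is the remaining relation you must verify for the configuration to be a linear plumbing.

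Second, and more seriously, the upper bounds $a\le 7$ (resp.\ $a\le 6$) are \emph{not} ``exactly the constraints ensuring that all the index subscripts remain within $\{1,\dots,n\}$.'' The classes listed in the corollary lie in range and satisfy every plumbing relation for all $a\le 11$ (resp.\ $a\le 9$); the purely homological obstruction only appears when the chain reaches $e_1-e_2$ (resp.\ $e_3-e_4$), i.e.\ at $a\ge 12$ (resp.\ $a\ge 10$), where orthogonality to the long class fails because its coefficient there is $-(a-1)$ rather than $-2$. This is precisely the content of the remark immediately following Corollary~\ref{cor:C_p}: the statement is known to fail only for $a\ge 12$ (resp.\ $a\ge 10$) and is declared \emph{open} for $8\le a\le 11$ (resp.\ $7\le a\le 9$), which would be impossible if the bounds were mere index bookkeeping. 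The true source of the bounds $a\le 7$, $a\le 6$ is geometric: an embedded copy of $C_p$ requires actual disjoint spheres (cores of $2$-handles together with Seifert surfaces/disks) meeting in the plumbing pattern, and the handle decompositions of Figures~\ref{fig6} and \ref{fig7} exhibit a chain of the required length only in those ranges. Since your verification is entirely at the level of intersection numbers in $H_2$, it cannot by itself produce the embedding; that part must be read off from the figures of Proposition~\ref{prop:construction}, which is where the restriction on $a$ genuinely enters.
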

\begin{remark}
One can easily check that Corollary~\ref{cor:C_p}.(1) (resp.\ (2)) does not hold for $a\geq 12$ (resp.\ $a\geq 10$). However, we do not know if Corollary~\ref{cor:C_p}.(1) (resp.\ (2)) holds for $8\leq a\leq 11$ (resp. $7\leq a\leq 9$). We discuss this question in Section~6.
\end{remark}
\begin{definition}
Let $X_{a,3}$ $(3\leq a\leq 7)$ be the rational blowdown of $\mathbf{CP}^2\# (3a+2)\overline{\mathbf{C}\mathbf{P}^2}$ along the copy of $C_{4a-9}$ in Corollary~\ref{cor:C_p}.(1). Let $X'_{a,3}$ $(3\leq a\leq 6)$ be the rational blowdown of $\mathbf{CP}^2\# (3a+4)\overline{\mathbf{C}\mathbf{P}^2}$ along the copy of $C_{4a-7}$ in Corollary~\ref{cor:C_p}.(2).
\end{definition}
\begin{remark}
The smooth $4$-manifolds $X_{3,3}$ and $X'_{3,3}$ correspond to the smooth $4$-manifolds $E_3$ and $E'_3$ in \cite{Y2}, respectively. 
\end{remark}
We use the lemma below. We can easily prove this lemma, similarly to the proof of~\cite[Lemma~5.1]{Y2}.
\begin{lemma}\label{lem:simply connected}
Let $X$ be a simply connected closed smooth $4$-manifold which contains a copy of $C_p$. Let $X_{(p)}$ be the rational blowdown of $X$ along the copy of $C_p$.\\
$(1)$ Suppose that there exists an element $\delta$ of $H_2(X;\mathbf{Z})$ such that $\delta$ and the elements $u_1,u_2,\dots,u_{p-1}$ of $H_2(C_p;\mathbf{Z})$ in $H_2(X;\mathbf{Z})$ satisfy that $\delta \cdot u_1=1$ and $\delta \cdot u_2=\delta \cdot u_3=\dots=\delta \cdot u_{p-1}=0$. 
Then $H_1(X_{(p)};\mathbf{Z})=0$.\\
$(2)$ Suppose that there exists an element $\delta$ of $H_2(X;\mathbf{Z})$ such that $\delta$ and the elements $u_1,u_2,\dots,u_{p-1}$ of $H_2(C_p;\mathbf{Z})$ in $H_2(X;\mathbf{Z})$ satisfy that $\delta \cdot u_1=\delta \cdot u_2=\dots=\delta \cdot u_{p-2}=0$ and $\gcd (\delta \cdot u_{p-1},\; p)=1$. 
Then $H_1(X_{(p)};\mathbf{Z})=0$.
\end{lemma}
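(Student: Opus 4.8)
The plan is to compute $H_1(X_{(p)};\mathbf{Z})$ directly from a Mayer--Vietoris argument associated to the decomposition $X_{(p)} = (X \setminus \mathrm{int}\, C_p) \cup_{\partial} B_p$, using the hypothesis on $\delta$ to show that the relevant gluing map is surjective onto $H_1(\partial C_p) \cong \mathbf{Z}/p^2$. First I would recall that $\partial C_p \cong L(p^2, p-1)$ has $H_1 \cong \mathbf{Z}/p^2$ and that $B_p$ is built from a single $0$-handle and a single $2$-handle, so $H_1(B_p;\mathbf{Z})=0$ and the map $H_1(\partial B_p;\mathbf{Z}) \to H_1(B_p;\mathbf{Z})$ is the obvious surjection. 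The key point is to understand the map induced on $H_1(\partial C_p;\mathbf{Z})$ by the inclusion into $X\setminus \mathrm{int}\, C_p$, or equivalently to show that a generator of $H_1(\partial C_p)$ bounds in $X\setminus\mathrm{int}\,C_p$.

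The mechanism is the element $\delta$. Since $X$ is simply connected and closed, $H_1(X\setminus \mathrm{int}\, C_p;\mathbf{Z})$ is generated by the image of $H_1(\partial C_p;\mathbf{Z})$, and a class in $H_1(\partial C_p)$ dies in $X\setminus\mathrm{int}\,C_p$ precisely when it is an intersection-boundary of a $2$-cycle in $X$ that meets $\partial C_p$ in that curve. I would use the linking structure of $C_p$: the linear plumbing $u_1, \dots, u_{p-1}$ with $u_i\cdot u_{i+1}=1$ presents $H_1(\partial C_p)$ via the intersection matrix, and the meridian of the $u_1$-handle (resp.\ the $u_{p-1}$-handle) is a natural generator of $\mathbf{Z}/p^2$. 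In case (1), the condition $\delta\cdot u_1=1$, $\delta\cdot u_i=0$ ($i\geq 2$) means geometrically that $\delta$ is represented by a surface meeting the $C_p$ plumbing once along the first handle and nowhere else; capping its boundary curve off in $X\setminus\mathrm{int}\,C_p$ shows that the meridian of $u_1$ — a generator of $H_1(\partial C_p)$ — bounds, hence dies. In case (2), the condition $\delta\cdot u_i=0$ for $i\leq p-2$ and $\gcd(\delta\cdot u_{p-1},p)=1$ means $\delta$ meets only the last handle, some number $k$ of times with $\gcd(k,p)=1$; this kills $k$ times the meridian of $u_{p-1}$, and since that meridian generates $\mathbf{Z}/p^2$ and $\gcd(k,p^2)=1$ (because $\gcd(k,p)=1$), it kills a generator.

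**The main obstacle** will be the bookkeeping that translates the algebraic conditions $\delta\cdot u_i = \text{(given)}$ into the precise statement that the image of $H_1(\partial C_p;\mathbf{Z})\to H_1(X\setminus\mathrm{int}\,C_p;\mathbf{Z})$ is trivial, and then feeding this through Mayer--Vietoris. Concretely one must verify that the composite $H_1(\partial C_p) \to H_1(X\setminus\mathrm{int}\,C_p)$ is zero and simultaneously that $H_1(B_p)=0$, so that in the sequence
\[
H_1(\partial C_p) \longrightarrow H_1(X\setminus \mathrm{int}\, C_p) \oplus H_1(B_p) \longrightarrow H_1(X_{(p)}) \longrightarrow 0
\]
the middle term is already zero (the left summand because $X$ is simply connected and the boundary generates, the right because $B_p$ is $1$-connected), forcing $H_1(X_{(p)};\mathbf{Z})=0$. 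The delicate part is justifying that $H_1(X\setminus \mathrm{int}\, C_p;\mathbf{Z})=0$ using $\delta$: this is where one genuinely needs $\gcd(\delta\cdot u_{p-1},p)=1$ in case (2), since a generator of the order-$p^2$ group $H_1(\partial C_p)$ must be hit, not merely a proper multiple.

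Since the statement asserts this can be proved exactly as in \cite[Lemma~5.1]{Y2}, I would in practice cite that argument and only indicate the two sign/divisibility conventions above; the genuinely new content is simply checking that the hypotheses on $\delta$ persist under the two slightly different intersection patterns in parts (1) and (2), which is a routine verification once the Mayer--Vietoris framework is set up.
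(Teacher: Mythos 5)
Your argument splits into two halves: (a) using $\delta$ to kill $H_1$ of the complement $W := X-\mathrm{int}\,C_p$, and (b) handling the $B_p$ side of the Mayer--Vietoris sequence. Half (a) is correct and is essentially the argument the paper appeals to via \cite[Lemma~5.1]{Y2}: the Mayer--Vietoris boundary $H_2(X)\to H_1(\partial C_p)$ sends $\delta$ to $\sum_i(\delta\cdot u_i)\mu_i$, where $\mu_i$ is the meridian of $u_i$; with the paper's conventions ($u_1,\dots,u_{p-2}$ of square $-2$, $u_{p-1}$ of square $-(p+2)$) the relations give $\mu_i=i\mu_1$ and $H_1(\partial C_p)=\mathbf{Z}/p^2$ generated by $\mu_1$, so the image of $\delta$ is $\mu_1$ in case (1) and $(\delta\cdot u_{p-1})(p-1)\mu_1$ in case (2), a generator in both cases since $\gcd\bigl((\delta\cdot u_{p-1})(p-1),\,p^2\bigr)=1$; exactness then forces the surjection $H_1(\partial C_p)\to H_1(W)$ to be zero, i.e.\ $H_1(W)=0$.

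Half (b), however, contains a genuine error: $B_p$ is \emph{not} built from a single $0$-handle and a single $2$-handle, and $H_1(B_p;\mathbf{Z})\neq 0$. The rational ball $B_p$ has exactly one $1$-handle (see Figure~\ref{C_p}; the paper itself recalls this in the proof of Proposition~\ref{prop:homeo}), with $\pi_1(B_p)\cong\mathbf{Z}/p$ and $H_1(B_p;\mathbf{Z})\cong\mathbf{Z}/p$. Indeed, $L(p^2,p-1)$ bounds no integral homology ball: by Poincar\'e--Lefschetz duality the kernel of $H_1(\partial B_p;\mathbf{Z})\to H_1(B_p;\mathbf{Z})$ has order exactly $p$, not $p^2$; and if $H_1(B_p)$ vanished, rational blowdown could never ``create torsions in the first homology group,'' contradicting the paper's Section~2. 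Consequently your concluding step --- that the middle term $H_1(W)\oplus H_1(B_p)$ is already zero, ``the right [summand] because $B_p$ is $1$-connected'' --- is false as written. The repair is short and uses only tools you already set up: since $H_1(W)=0$, Mayer--Vietoris identifies $H_1(X_{(p)};\mathbf{Z})$ with the cokernel of $H_1(\partial B_p;\mathbf{Z})\to H_1(B_p;\mathbf{Z})$, and this map is onto because $B_p$ is obtained from a collar $\partial B_p\times I$ by attaching handles of index $\geq 2$ (concretely, $\mathbf{Z}/p^2\to\mathbf{Z}/p$ is reduction mod $p$). With that substitution the proof goes through; without it, the step that actually produces $H_1(X_{(p)})=0$ rests on a false claim about the central object of the rational blowdown construction.
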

\begin{remark}
We do not use Lemma~\ref{lem:simply connected}.(2), in this section. We use it to prove Proposition~\ref{prop:final exotic} in Section~\ref{section:idea}.
\end{remark}
\begin{proposition}\label{prop:homeo}
The smooth $4$-manifolds $X_{a,3}$ and $X'_{a,3}$ are homeomorphic to $\mathbf{CP}^2\# (12-a)\overline{\mathbf{C}\mathbf{P}^2}$. 
\end{proposition}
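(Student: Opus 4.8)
The plan is to apply Freedman's classification of simply connected closed topological $4$-manifolds: it suffices to show that each of $X_{a,3}$ and $X'_{a,3}$ is simply connected and has intersection form isomorphic to $\langle 1\rangle\oplus(12-a)\langle -1\rangle$, the form of $\mathbf{CP}^2\#(12-a)\overline{\mathbf{C}\mathbf{P}^2}$. Since both manifolds are smooth their Kirby--Siebenmann invariants vanish, so once the fundamental groups and intersection forms are matched the homeomorphisms follow.

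First I would record the Betti numbers. The copy of $C_p$ (with $p=4a-9$, resp.\ $p=4a-7$) is a negative-definite linear plumbing of rank $p-1$, while $B_p$ is a rational homology ball; hence, as recalled in Section~2, the rational blowdown preserves $b_2^+=1$ and lowers the total $b_2$ by $p-1$. A direct count gives $b_2^-=(3a+2)-(4a-10)=12-a$ for $X_{a,3}$ and $b_2^-=(3a+4)-(4a-8)=12-a$ for $X'_{a,3}$, so in both cases $(b_2^+,b_2^-)=(1,12-a)$. For simple connectivity I would use the standard van Kampen argument for rational blowdowns: because $X$ and $C_p$ are simply connected, $\pi_1$ of the complement of $C_p$ is normally generated by the image of $\pi_1(\partial C_p)$; since $\pi_1(B_p)=\mathbf{Z}/p$ is generated by the image of $\pi_1(\partial B_p)$, the group $\pi_1(X_{(p)})$ is cyclic and hence abelian, so $\pi_1(X_{(p)})\cong H_1(X_{(p)};\mathbf{Z})$. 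It then remains to kill $H_1$, and for this I would invoke Lemma~\ref{lem:simply connected}.(1): taking $\delta=e_2-e_{13-a}$ for $X_{a,3}$ and $\delta=e_4-e_{13-a}$ for $X'_{a,3}$, one checks from the explicit classes of Corollary~\ref{cor:C_p} that $\delta\cdot u_1=1$ and $\delta\cdot u_i=0$ for all $i\geq 2$ (the two terms of $\delta$ cancel against the two coefficients $-2$ they meet in $u_{p-1}$), whence $H_1(X_{(p)};\mathbf{Z})=0$ and each manifold is simply connected.

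Once simple connectivity is known, $H_2$ is free and the intersection form is unimodular; it is indefinite since $b_2^+=1$ and $b_2^-=12-a\geq 5$. The remaining point is that the form is \emph{odd}. I would argue this by contradiction: an even unimodular form on a smooth simply connected $4$-manifold makes the manifold spin, so Rokhlin's theorem would force $16\mid\sigma=a-11$, which fails for every $3\leq a\leq 7$ (here $\sigma\in\{-8,\dots,-4\}$). Hence the form is odd, and an odd indefinite unimodular form is diagonalizable, giving $\langle 1\rangle\oplus(12-a)\langle -1\rangle$; Freedman's theorem then yields the asserted homeomorphisms. I expect the oddness step to be the real obstacle: the numerical data alone do not suffice, since for $a=3$ the even form $H\oplus(-E_8)$ has the same rank $10$ and signature $-8$, so ruling out the even case genuinely requires Rokhlin's theorem (or the exhibition of an explicit class of odd square orthogonal to all the $u_i$).
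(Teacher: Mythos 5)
Your $H_1$-killing classes are fine: $\delta=e_2-e_{13-a}$ and $\delta=e_4-e_{13-a}$ do satisfy $\delta\cdot u_1=1$ and $\delta\cdot u_i=0$ for $i\geq 2$ against the classes of Corollary~\ref{cor:C_p}, your Betti-number count is right, and your Rokhlin argument for oddness of the intersection form is exactly what the paper's phrase ``Freedman's theorem together with Rochlin's theorem'' encodes. The genuine gap is the step ``the group $\pi_1(X_{(p)})$ is cyclic.'' Van Kampen gives you two things: the map $\pi_1(X-\text{int}\,C_p)\to\pi_1(X_{(p)})$ is surjective (since $\pi_1(\partial B_p)\to\pi_1(B_p)$ is onto), and $\pi_1(X-\text{int}\,C_p)$ is \emph{normally} generated by the image of the cyclic group $\pi_1(\partial C_p)$ (since $\pi_1(X)=1$ and $\pi_1(C_p)=1$). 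Normal generation by one element is strictly weaker than generation by one element; it does not make the group cyclic or even abelian, so you cannot conclude $\pi_1(X_{(p)})\cong H_1(X_{(p)};\mathbf{Z})$. Nor can you bypass this point: a group that is normally generated by one element and has trivial abelianization need not be trivial (the binary icosahedral group, i.e.\ $\pi_1$ of the Poincar\'e homology sphere, is perfect and normally generated by a single element). This is exactly why the standard simple-connectivity statement for rational blowdowns (Fintushel--Stern, Gompf--Stipsicz) assumes $\pi_1(X-C_p)=1$, not merely $\pi_1(X)=1$.

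The missing ingredient is the handle-theoretic structure that the paper's construction was designed to produce, and it is what the paper's own proof uses. By Proposition~\ref{prop:construction}, $\mathbf{CP}^2\#(3a+2)\overline{\mathbf{C}\mathbf{P}^2}$ has a handle decomposition (Figure~\ref{fig6}) with no 1-handles in which $C_{4a-9}$ sits as a sub-handlebody containing the 0-handle; hence $X-\text{int}\,C_p$ is built from $\partial C_p\times I$ by attaching handles of index at least $2$ only, so $\pi_1(X-\text{int}\,C_p)$ is a quotient of $\pi_1(\partial C_p)\cong\mathbf{Z}/p^2$, in particular cyclic. Equivalently, as the paper phrases it, $B_p$ has a handle decomposition with a single 1-handle, so $X_{a,3}$ inherits a handle decomposition with exactly one 1-handle, whence $\pi_1(X_{a,3})$ is generated by one element. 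Either way cyclicity, hence abelianness, is restored, and then your choice of $\delta$ together with Lemma~\ref{lem:simply connected}.(1) finishes the simple-connectivity argument; the remainder of your proof goes through as written.
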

\begin{proof} We give a proof for $X_{a,3}$. We can similarly prove for $X'_{a,3}$. 

Recall that the rational homology $4$-ball $B_{4a-9}$ has only one $1$-handle. It thus follows from Proposition~\ref{prop:construction} that $X_{a,3}$ admits a handle decomposition such that the number of $1$-handles is one. Hence the fundamental group of $X_{a,3}$ is commutative. Define an element $\delta$ of $H_2(\mathbf{CP}^2\# (3a+2)\overline{\mathbf{C}\mathbf{P}^2};\mathbf{Z})$ by $\delta:=e_{12-a}-e_{13-a}$. Lemma~\ref{lem:simply connected}.(1) then shows that $H_1(X_{a,3};\mathbf{Z})=0$. Therefore $X_{a,3}$ is simply connected. 

Since $X_{a,3}$ is obtained from $\mathbf{CP}^2\# (3a+2)\overline{\mathbf{C}\mathbf{P}^2}$ by rationally blowing down $C_{4a-7}$, we easily have $b_2^+=1$ and $b_2^-=12-a$. Therefore Freedman's theorem together with Rochlin's theorem shows that $X_{a,3}$ is homeomorphic to $\mathbf{CP}^2\# (12-a)\overline{\mathbf{C}\mathbf{P}^2}$.
\end{proof}
We can easily prove the lemma below, following the rational blowdown procedure introduced by Gompf-Stipsicz~\cite[Section~8.5]{GS}. 
\begin{lemma}[{\cite[Lemma~3.6]{Y2}}]\label{without-handle}Suppose that a simply connected closed smooth 
$4$-manifold $X$ has a handle decomposition 
as in Figure~$\ref{fig12}$. Here $n$ is an arbitrary integer, $h_2$ and $h_3$ are arbitrary natural numbers. Note that we write usual framings instead of homology classes in the figure. 

Let $X_{(p)}$ be the rational blowdown of $X$ along the copy of $C_p$ in Figure~$\ref{fig12}$. 
Then $X_{(p)}$ admits a handle decomposition 
\begin{equation*}
X_{(p)}=\text{one $0$-handle} \cup \text{$(h_2+1)$ $2$-handles} \cup \text{$h_3$ $3$-handles} \cup \text{one $4$-handle}.
\end{equation*}
In particular $X_{(p)}$ admits a handle decomposition without $1$-handles.
\begin{figure}[ht!]
\begin{center}
\includegraphics[width=3.0in]{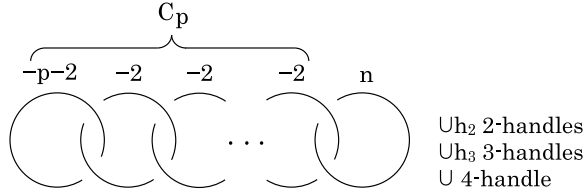}
\caption{Handle decomposition of $X$}
\label{fig12}
\end{center}
\end{figure}
\end{lemma}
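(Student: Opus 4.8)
The plan is to establish the handle decomposition of $X_{(p)}$ by directly applying the rational blowdown procedure of Gompf-Stipsicz to the given diagram of $X$. Since $X$ is presented in Figure~\ref{fig12} with one $0$-handle, one $1$-handle, some $2$-handles, $h_3$ $3$-handles, and one $4$-handle (this being the typical shape of the figure that produces a copy of $C_p$ sitting over a single $1$-handle), the first step is to identify precisely the $2$-handles of $X$ that constitute the $C_p$ configuration, namely the chain $u_1,\dots,u_{p-1}$ together with the one $1$-handle that $C_p$ carries. I would recall from Section~2 that $\partial C_p \cong \partial B_p \cong L(p^2,p-1)$ and that $B_p$ has exactly one $1$-handle and one $2$-handle in its standard handlebody picture.

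The core of the argument is the explicit cut-and-paste. First I would delete from the diagram the handles forming $C_p$: this removes the $1$-handle of $C_p$ and the $2$-handles $u_1,\dots,u_{p-1}$. Then I would glue in $B_p$ along the common boundary $L(p^2,p-1)$, following the standard recipe in \cite[Section~8.5]{GS} for drawing the result. The key bookkeeping point is that $B_p$ contributes exactly one $1$-handle and one $2$-handle, and that in the combined diagram this new $1$-handle geometrically cancels against one of the $2$-handles coming from $B_p$ (or, equivalently, the standard picture of the glued-in $B_p$ can be drawn so that its $1$-handle is cancelled), leaving the manifold with no $1$-handles at all. After cancellation, counting handles gives one $0$-handle, $(h_2+1)$ $2$-handles (the original $h_2$ handles of $X$ not involved in $C_p$, plus the single surviving $2$-handle from the rational blowdown), $h_3$ $3$-handles, and one $4$-handle, matching the claimed decomposition.

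I expect the main obstacle to be verifying the handle cancellation carefully, that is, checking that after gluing $B_p$ the resulting $1$-handle indeed cancels cleanly against a $2$-handle without disturbing the attaching regions of the remaining $h_2+1$ two-handles. This requires tracking the attaching circles through the gluing and confirming that the cancelling $2$-handle's attaching circle runs over the $1$-handle geometrically once; this is exactly the content of the Gompf-Stipsicz procedure, so the verification reduces to a careful reading of their diagrams applied to our Figure~\ref{fig12}. Because $X$ is simply connected, the hypothesis guarantees there is no fundamental-group obstruction, and the resulting decomposition without $1$-handles is consistent with $\pi_1(X_{(p)})=1$.

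Finally, I would note that since this lemma is quoted verbatim as \cite[Lemma~3.6]{Y2}, the rigorous diagram-chase has already been carried out there, so in the present paper the proof amounts to invoking that procedure and performing the handle count above; the only genuinely delicate step, the $1$-handle/$2$-handle cancellation, is standard once the configuration of $C_p$ in Figure~\ref{fig12} is correctly identified.
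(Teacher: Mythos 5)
Your overall plan --- run the Gompf--Stipsicz procedure of \cite[Section~8.5]{GS}, cancel a $1$-handle/$2$-handle pair, and count --- is indeed the paper's approach (the paper simply defers to \cite[Lemma~3.6]{Y2}, whose proof is this diagram chase). But you have misidentified the cancelling pair, and this is not a cosmetic slip: it is the entire content of the lemma. You assert that the $1$-handle of $B_p$ ``geometrically cancels against one of the $2$-handles coming from $B_p$,'' equivalently that the glued-in $B_p$ ``can be drawn so that its $1$-handle is cancelled.'' That is impossible: $B_p$ has exactly one $2$-handle, and its attaching circle runs over the $1$-handle $p$ times, geometrically and algebraically --- this is precisely why $\pi_1(B_p)\cong\mathbf{Z}/p$ and $\partial B_p\cong L(p^2,p-1)$. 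If that pair cancelled, $B_p$ would be a $4$-ball and rational blowdown would never change anything. The handle that actually dies is the $n$-framed $2$-handle of $X$ shown in Figure~\ref{fig12}: by hypothesis its attaching circle links the $-(p+2)$-framed circle of the $C_p$ chain geometrically once, and under the Gompf--Stipsicz replacement of $C_p$ by $B_p$ this circle becomes a circle running geometrically once over the dotted circle of $B_p$; that pair cancels. What survives is $B_p$'s own $2$-handle together with the $h_2$ untouched $2$-handles, giving the claimed $h_2+1$. Note that your final count (``the $h_2$ originals plus the single surviving $2$-handle from the rational blowdown'') tacitly presupposes this correct cancellation, not the one you described, so your write-up is internally inconsistent as well.

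Symptomatic of the gap is that your argument never uses the specific configuration of Figure~\ref{fig12} at all --- a figure you also misread: it contains no $1$-handles, and neither does $C_p$, which is a linear plumbing built from one $0$-handle and $p-1$ $2$-handles (Figure~\ref{C_p}); it is $B_p$, not $C_p$, that carries a $1$-handle. An argument that ignores the $n$-framed handle would show that the rational blowdown of \emph{any} simply connected, $1$-handle-free $X$ along \emph{any} embedded copy of $C_p$ admits a $1$-handle-free decomposition. That conclusion is false: as noted in Section~2, rational blowdown can create torsion in $H_1$, whereas a closed $4$-manifold with a handle decomposition without $1$-handles is necessarily simply connected. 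For the same reason, your remark that simple connectivity of $X$ removes any ``fundamental-group obstruction'' is backwards: $\pi_1(X_{(p)})=1$ is a \emph{consequence} of the geometric-once linking of the $n$-framed handle (which kills the $\mathbf{Z}/p$ coming from $B_p$), not of $\pi_1(X)=1$. The repair is concrete: identify the $n$-framed $2$-handle in Figure~\ref{fig12}, track its attaching circle through the Gompf--Stipsicz replacement, and cancel it against the dotted circle of $B_p$; the handle count then comes out exactly as stated.
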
\medskip 

Lemma~\ref{without-handle} together with Proposition~\ref{prop:construction} gives the following proposition. 
\begin{proposition}\label{prop:handle}
$(1)$ For $3\leq a\leq 6$, the smooth $4$-manifold $X_{a,3}$ admits a handle decomposition without $1$-handles, namely, 
\begin{equation*}
X_{a,3}=\text{one $0$-handle} \cup \text{$(14-a)$ $2$-handles} \cup \text{two $3$-handles} \cup \text{one $4$-handle}.
\end{equation*}
$(2)$ For $3\leq a\leq 5$, the smooth $4$-manifold $X'_{a,3}$ admits a handle decomposition without $1$- and $3$-handles, namely, 
\begin{equation*}
X'_{a,3}=\text{one $0$-handle} \cup \text{$(12-a)$ $2$-handles} \cup \text{one $4$-handle}.
\end{equation*} 
$(3)$ For $a=6$, the smooth $4$-manifold $X'_{a,3}$ admits a handle decomposition without $3$-handles, namely, 
\begin{equation*}
X'_{a,3}=\text{one $0$-handle} \cup \text{one $1$-handle} \cup \text{$(13-a)$ $2$-handles} \cup \text{one $4$-handle}.
\end{equation*} 
\end{proposition}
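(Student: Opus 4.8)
The plan is to obtain all three decompositions as direct consequences of Lemma~\ref{without-handle}, applied to the diagrams produced in Proposition~\ref{prop:construction}. The first step is to check that Figure~\ref{fig6} and Figure~\ref{fig7} already present $\mathbf{CP}^2\# (3a+2)\overline{\mathbf{C}\mathbf{P}^2}$ and $\mathbf{CP}^2\# (3a+4)\overline{\mathbf{C}\mathbf{P}^2}$ in the shape of Figure~\ref{fig12}: a single $0$-handle, no $1$-handles, the chain $C_p$ (with $p=4a-9$, respectively $p=4a-7$) drawn explicitly as located in Corollary~\ref{cor:C_p}, together with $h_2$ extra $2$-handles, $h_3$ $3$-handles, and a single $4$-handle. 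Reading off the integers $n,h_2,h_3$ of Figure~\ref{fig12} from each of Figure~\ref{fig6} and Figure~\ref{fig7} is the bookkeeping step; once this identification is made, Lemma~\ref{without-handle} applies verbatim.

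Granting this, Lemma~\ref{without-handle} at once produces, for each of $X_{a,3}$ and $X'_{a,3}$, a handle decomposition with one $0$-handle, \emph{no $1$-handles}, $(h_2+1)$ $2$-handles, $h_3$ $3$-handles, and one $4$-handle. The number of $2$-handles is then forced: by Proposition~\ref{prop:homeo} both manifolds are simply connected with $b_2=13-a$, so their Euler characteristic equals $15-a$, and this determines the $2$-handle count once the numbers of $0$-, $1$-, $3$- and $4$-handles are fixed. Thus the whole proposition reduces to computing $h_3$, the number of surviving $3$-handles, in each case.

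This $3$-handle count is the crux, and it is where the three parts diverge. For $X_{a,3}$ I expect Figure~\ref{fig6} to leave exactly two $3$-handles, coming from the $2$-/$3$-handle pairs introduced in the proof of Proposition~\ref{prop:construction}.(1); these cannot be cancelled, and Lemma~\ref{without-handle} gives part (1) directly. For $X'_{a,3}$ the behaviour depends on $a$: I expect Figure~\ref{fig7} to leave no $3$-handle when $3\le a\le 5$, which gives part (2) immediately (no $1$- and no $3$-handles), and to leave a single $3$-handle when $a=6$. In the latter case I would turn the resulting no-$1$-handle decomposition upside down; dualizing interchanges $1$- and $3$-handles while preserving the $2$-handles and the Euler characteristic, so the lone $3$-handle becomes a single $1$-handle and no $3$-handles remain, which is exactly part (3). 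Note that the stated $2$-handle counts in (2) and (3) differ by one, consistent with this handle-trade at $a=6$.

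The main obstacle is precisely the $3$-handle bookkeeping: verifying that the construction leaves two $3$-handles for $X_{a,3}$ and pinning down the jump from zero to one surviving $3$-handle for $X'_{a,3}$ at $a=6$. This requires tracking the handle slides and cancellations through Proposition~\ref{prop:construction} geometrically, rather than relying on the homological counts, since the latter only constrain the alternating sum of handle numbers. The same need for geometric control is what limits part (1) to $a\le 6$: for $a=7$ the diagram of Figure~\ref{fig6} need no longer be in the form of Figure~\ref{fig12}, so Lemma~\ref{without-handle} does not obviously apply and the clean statement is not claimed.
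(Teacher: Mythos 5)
Your route for parts (1) and (2) is exactly the paper's: its entire proof is that Lemma~\ref{without-handle} applied to the decompositions of Proposition~\ref{prop:construction} yields the proposition, the only content being the check that Figures~\ref{fig6} and \ref{fig7} have the shape of Figure~\ref{fig12} and the bookkeeping of $h_2$ and $h_3$. The genuine gap is your mechanism for part (3). You posit that at $a=6$ the lemma still applies to Figure~\ref{fig7} but now outputs a single $3$-handle, which you then trade for a $1$-handle by dualizing. That cannot happen: the $3$-handles in the lemma's output are precisely the $h_3$ $3$-handles already present in the diagram of $X$ (the blowdown procedure itself creates none, since $B_p$ contributes one $1$-handle and one $2$-handle), and Figure~\ref{fig7} is a single diagram whose number of $3$-handles does not depend on $a$. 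Part (2) forces $h_3=0$ for $3\leq a\leq 5$, hence $h_3=0$ at $a=6$ as well; so if Lemma~\ref{without-handle} applied at $a=6$, part (2) would simply extend to $a=6$ and part (3) would be pointless. What actually fails at $a=6$ is the \emph{hypothesis} of the lemma --- the same failure mode you correctly identified for $a=7$ in part (1): the chain $C_{4a-7}$ exhausts the linear chain of $2$-handles available in Figure~\ref{fig7}, leaving no spare meridian $2$-handle on its first component, so the diagram is not of the form of Figure~\ref{fig12}. One then performs the blowdown directly by the Gompf--Stipsicz procedure: delete the chain and glue in the diagram of $B_{4a-7}$. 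The dotted-circle $1$-handle of $B_{4a-7}$, which in the lemma's proof is cancelled against the meridian $2$-handle, now has nothing to cancel it and survives; no $3$-handles appear because Figure~\ref{fig7} contains none. That surviving $1$-handle is the one in part (3); no dualization is involved. (Dualization is only needed later, to turn part (3) into the ``no $1$-handles'' claim of Theorem 1.1(2).)

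A second, smaller problem: you assert that the $2$-handle counts are ``forced'' by the Euler characteristic, but you never run the check, and running it contradicts the stated counts. Since $b_2(X_{a,3})=b_2(X'_{a,3})=13-a$ and $\chi=15-a$, part (1) needs $15-a$ (not $14-a$) $2$-handles, part (2) needs $13-a$ (not $12-a$), and part (3) needs $14-a$ (not $13-a$); for instance, a simply connected closed $4$-manifold with no $1$- and $3$-handles must have exactly $b_2$ $2$-handles. The same counts follow from the output $h_2+1$ of Lemma~\ref{without-handle} computed from Figures~\ref{fig6} and \ref{fig7}. So the numbers in the statement are off by one, and the consistency argument you appeal to, had you carried it out, would have exposed this rather than certified the statement.
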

\section{Seiberg-Witten invariants}
In this section, we briefly review facts about the Seiberg-Witten invariants with $b_2^+=1$. 
For details and examples of computations, see Fintushel-Stern \cite{FS3}, \cite{FS1}, \cite{FS2}, Stern \cite{S}, Park \cite{P1}, \cite{P2}, Ozsv\'{a}th-Szab\'{o} \cite{OS}, Stipsicz-Szab\'{o} \cite{SS} and Park-Stipsicz-Szab\'{o} \cite{PSS}.

Suppose that $X$ is a simply connected closed smooth $4$-manifold with $b_2^+(X)=1$. Let $\mathcal{C}(X)$ be the set of 
characteristic elements of $H^2(X;\mathbf{Z})$. Fix a homology orientation on $X$, that is, 
orient $H^2_+(X;\mathbf{R}):=\{ H\in H^2(X;\mathbf{Z})\, |\, H^2>0\} $. 
Then the (small-perturbation) Seiberg-Witten invariant $SW_{X,H}(K)\in \mathbf{Z}$ is defined for 
every positively oriented element $H\in H^2_+(X;\mathbf{R})$ and every element $K\in \mathcal{C}(X)$ such that $K\cdot H\neq 0$. 
Let $e(X)$ and $\sigma(X)$ be the Euler characteristic and the signature of $X$, respectively, and $d_X(K)$ the even integer defined by 
$d_X(K)=\frac{1}{4}(K^2-2e(X)-3\sigma(X))$ 
for $K\in \mathcal{C}(X)$. It is known that if $SW_{X,H}(K)\neq 0$ for some $H\in H^2_+(X;\mathbf{R})$, then $d_X(K)\ge 0$. 
The wall-crossing formula tells us the dependence of $SW_{X,H}(K)$ on $H$:
if $H, H' \in H^2_+(X;\mathbf{R})$ and $K\in \mathcal{C}(X)$ satisfy $H\cdot H'>0$ and $d_X(K)\ge 0$, then
\begin{multline*}
SW_{X,H'}(K)=SW_{X,H}(K)\\
+
\begin{cases}
0&\text{if $K\cdot H$ and $K\cdot H'$ have the same sign,}\\
(-1)^{\frac{1}{2}d_X(K)}&\text{if $K\cdot H>0$ and $K\cdot H'<0$,}\\
(-1)^{1+\frac{1}{2}d_X(K)}&\text{if $K\cdot H<0$ and $K\cdot H'>0$}.
\end{cases}
\end{multline*}
Note that these facts imply that $SW_{X,H}(K)$ is independent of $H$ in the case $b_2^-(X)\leq 9$, in other words, 
the Seiberg-Witten invariant $SW_{X}:\mathcal{C}(X)\to \mathbf{Z}$ is well-defined.

We recall the change of the Seiberg-Witten invariants by rationally blowing down. Assume that $X$ contains a copy of $C_p$. Let $X_{(p)}$ be the rational blowdown of $X$ along the copy of $C_p$. Suppose that $X_{(p)}$ is simply connected. The following theorems are well-known.
\begin{theorem}[{Fintushel-Stern \cite{FS1}}]\label{thm:4.1}
For every characteristic element $K$ of $H^2(X_{(p)};\mathbf{Z})$, there exists a characteristic element $\tilde{K}$ of $H^2(X;\mathbf{Z})$ such that 
$K\rvert _{X_{(p)}-\text{\normalfont{int}}\,B_{p}}=\tilde{K}\rvert _{X-\text{\normalfont{int}}\,C_{p}}$ and 
$d_{X_{(p)}}(K)=d_X(\tilde{K})$. Such a characteristic element $\tilde{K}$ of $H^2(X;\mathbf{Z})$ is called a lift of $K$.
\end{theorem}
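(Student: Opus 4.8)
The plan is to prove Theorem~\ref{thm:4.1} by exploiting the defining feature of the rational blowdown: $X_{(p)}$ and $X$ share a common piece, namely the complement $X-\text{int}\,C_p = X_{(p)}-\text{int}\,B_p$, and differ only in that $C_p$ is replaced by the rational homology ball $B_p$. Given a characteristic element $K$ of $H^2(X_{(p)};\mathbf{Z})$, I would first restrict it to this common complement. The essential topological input is that $\partial C_p = \partial B_p = L(p^2,p-1)$ and that $B_p$ is a rational homology ball, so the restriction maps $H^2(X_{(p)};\mathbf{Z})\to H^2(X_{(p)}-\text{int}\,B_p;\mathbf{Z})$ and $H^2(X;\mathbf{Z})\to H^2(X-\text{int}\,C_p;\mathbf{Z})$ have controlled kernels and images governed by the Mayer--Vietoris sequence.

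First I would set up the Mayer--Vietoris / long exact sequences of the pairs $(X_{(p)}, B_p)$ and $(X, C_p)$ relative to their common boundary $L(p^2,p-1)$, whose second cohomology is the finite group $\mathbf{Z}/p^2$. The goal is to show that the cohomology of the complement, together with the restriction from $X_{(p)}$, determines enough data to lift $K$ to a characteristic class $\tilde K$ on $X$. Concretely, I would let $c$ denote the common restriction $K\rvert_{X_{(p)}-\text{int}\,B_p}$ and seek $\tilde K\in H^2(X;\mathbf{Z})$ characteristic with $\tilde K\rvert_{X-\text{int}\,C_p}=c$. Existence of \emph{some} extension follows from the cohomology exact sequences once one checks the image of $c$ under the relevant boundary map vanishes, or can be corrected by adding a class supported on $C_p$; the characteristic condition must then be verified on the $u_i$ classes spanning $H_2(C_p)$, using that the intersection form of $C_p$ is the standard negative-definite plumbing form whose characteristic covectors are understood explicitly.

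The degree formula $d_{X_{(p)}}(K)=d_X(\tilde K)$ I would then obtain by a direct computation comparing the two sides through the invariants $e$ and $\sigma$. The point is that passing from $X$ to $X_{(p)}$ replaces $C_p$ (with $b_2=p-1$, all of negative type, so $\sigma$ contribution $-(p-1)$ and Euler characteristic contribution $p-1$) by $B_p$ (a rational homology ball, contributing nothing to $b_2$ or $\sigma$ and with matching Euler characteristic after the gluing). Since $K$ and $\tilde K$ agree on the common complement, the square $K^2$ and $\tilde K^2$ differ only by the contributions from $B_p$ and $C_p$ respectively; I would check that the change in $K^2$ exactly cancels the change in $2e+3\sigma$, so that $d$ is preserved. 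This is essentially a bookkeeping lemma using $\sigma(C_p)=-(p-1)$, $e(C_p)=p$, $b_2(B_p)=0$, and $\sigma(L(p^2,p-1))$-boundary corrections.

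The main obstacle I expect is the careful treatment of the self-intersection $K^2$ across the gluing, since $K^2$ and $\tilde K^2$ are pairings on manifolds with the \emph{same} boundary but different fillings, so the squares are not literally restrictions of a single class on a closed manifold. Making the identity $K^2-\tilde K^2 = (\text{contribution over }B_p)-(\text{contribution over }C_p)$ rigorous requires expressing $K^2$ as a relative self-pairing and controlling the boundary term via the linking form of $L(p^2,p-1)$; the rational-homology-ball hypothesis on $B_p$ is exactly what forces the $B_p$ contribution to be the correct rational quantity that matches the degree-shift bookkeeping. Everything else reduces to the standard exact-sequence algebra and the explicit description of $C_p$ recorded in Figure~\ref{C_p}.
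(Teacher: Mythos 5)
First, a point of comparison: the paper itself contains no proof of this statement --- it is quoted as a known theorem of Fintushel--Stern \cite{FS1}, so your attempt can only be measured against the argument in that reference. Your outline does follow the same skeleton as the original proof: restrict $K$ to the common piece $X_0 = X-\text{int}\,C_p = X_{(p)}-\text{int}\,B_p$, extend over $C_p$, and compare degrees using $e(C_p)=p$, $\sigma(C_p)=-(p-1)$, $e(B_p)=1$, $\sigma(B_p)=0$. Carried out carefully (additivity of the rational relative self-pairings over a rational homology sphere boundary, plus $(K\rvert_{B_p})^2=0$ because $H^2(B_p;\mathbf{Q})=0$), this bookkeeping shows that $d_{X_{(p)}}(K)=d_X(\tilde{K})$ holds \emph{if and only if} $(\tilde{K}\rvert_{C_p})^2 = 1-p$.

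That equivalence is exactly where your proposal has a genuine gap. The extension of $K\rvert_{X_0}$ over $C_p$ is far from unique: any two extensions differ by the image of $H^2(C_p,\partial C_p;\mathbf{Z})\cong\mathbf{Z}^{p-1}$, and their squares vary; since the form of $C_p$ is negative definite, a generic characteristic extension has square strictly less than $1-p$, and for it the degree formula simply fails. So your sentence ``I would check that the change in $K^2$ exactly cancels the change in $2e+3\sigma$'' is not a check but a constraint that must be solved: one has to prove that for every boundary value which actually occurs (those of the form $mp\in\mathbf{Z}_{p^2}$ with $m\equiv p-1 \pmod 2$, which is what restriction from the $B_p$ side produces), the lattice of $C_p$ --- the linear plumbing on $(-(p+2),-2,\dots,-2)$ --- admits a characteristic covector with that prescribed boundary restriction and square \emph{exactly} $1-p$. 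This explicit arithmetic lemma about the plumbing lattice is the heart of Fintushel--Stern's proof (and is precisely the content behind Theorem~\ref{thm:4.2}); nothing in your Mayer--Vietoris setup produces it. A secondary, fixable issue: when $p$ is even, $H^1(\partial C_p;\mathbf{Z}_2)\neq 0$, so ``characteristic on each piece'' does not automatically glue to ``characteristic on $X$''; the original argument sidesteps this by working with spin$^c$ structures rather than characteristic classes.
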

\begin{theorem}[{Fintushel-Stern \cite{FS1}}]\label{nthm:4.1}
If a characteristic element $\tilde{K}$ of $H^2(X;\mathbf{Z})$ is a lift of some characteristic element $K$ of $H^2(X_{(p)};\mathbf{Z})$, then $SW_{X_{(p)},H}(K)=SW_{X,H}(\tilde{K})$ for every positively oriented element $H$ of $H^2_+(X;\mathbf{R})$ 
which is orthogonal to the subspace $H_2(C_p;\mathbf{R})$ of $H_2(X;\mathbf{R})$. Note that we view $H$ as a positively oriented element of $H^2_+(X_{(p)};\mathbf{R})$.
\end{theorem}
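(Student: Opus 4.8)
The plan is to compute both Seiberg-Witten invariants by stretching a cylindrical neck along the common boundary lens space $Y=L(p^2,p-1)=\partial C_p=\partial B_p$ and applying the gluing theory for the Seiberg-Witten equations. Write $W:=X-\text{int}\,C_p=X_{(p)}-\text{int}\,B_p$ for the piece shared by $X$ and $X_{(p)}$; by hypothesis the restrictions of the characteristic elements agree there. I would equip $X$ (resp.\ $X_{(p)}$) with a family of metrics whose neck near $Y$ grows arbitrarily long. As the neck length tends to infinity, any sequence of solutions of the (perturbed) Seiberg-Witten equations with bounded energy converges, after gauge fixing, to a pair of finite-energy solutions, one on $W$ and one on the relevant cap ($C_p$ for $X$, $B_p$ for $X_{(p)}$), each asymptotic to a critical point of the Chern-Simons-Dirac functional on $Y$. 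Because $\tilde{K}$ is a lift of $K$ in the sense of Theorem~\ref{thm:4.1}, the two problems carry identical boundary data on $Y$ and identical moduli contributions from $W$.

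The crux is to show that the two caps contribute equally. The rational homology ball $B_p$ has vanishing second Betti number over $\mathbf{Q}$, so in particular $b_2^+(B_p)=0$; for the spin$^c$ structure on $B_p$ restricting the one determined by $K$, the moduli space of finite-energy solutions asymptotic to the relevant reducible on $Y$ is a single point, contributing a factor $1$. One must then establish the parallel statement for $C_p$: although $C_p$ is a plumbing with $b_2=p-1$, the spin$^c$ structure coming from the lift $\tilde{K}$, together with the orthogonality of the period point below, forces its moduli space likewise to reduce to a single point of the same contribution. Granting both cap computations, the gluing theorem expresses $SW_{X_{(p)},H}(K)$ and $SW_{X,H}(\tilde{K})$ as the same count over $W$ multiplied by the cap factor $1$, whence they are equal.

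The hypothesis that $H$ be orthogonal to $H_2(C_p;\mathbf{R})$ does the essential bookkeeping in the $b_2^+=1$ setting: since $H\cdot u_i=0$ for all $i$ and $H^2>0$, the class $H$ defines a legitimate period point simultaneously on $X$ and on $X_{(p)}$, under the identification of $H^2(X_{(p)};\mathbf{R})$ with the orthogonal complement of $H_2(C_p;\mathbf{R})$ in $H^2(X;\mathbf{R})$. Thus both invariants are read off in the same chamber and no wall-crossing correction enters the comparison. I expect the main obstacle to be purely analytic: proving the compactness and gluing statements rigorously --- that neck-stretched solutions converge to broken solutions with the asserted limits, that gluing is an orientation-preserving diffeomorphism onto the moduli space for long necks (transversality and orientation conventions), and, above all, the uniqueness of the solution count on the caps $B_p$ and $C_p$, which is where the special geometry of $L(p^2,p-1)$ and its reducible spectrum must be exploited.
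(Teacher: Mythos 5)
The first thing to say is that the paper contains no proof of this statement: it is quoted as a known theorem of Fintushel--Stern with a citation to \cite{FS1}, so the only thing to compare your outline against is the original gluing argument in that reference --- which your plan does follow in broad strokes (stretch the neck along $\partial C_p=\partial B_p=L(p^2,p-1)$, compare the two caps, keep the chamber fixed). Your treatment of the hypothesis on $H$ is the right one: orthogonality to $H_2(C_p;\mathbf{R})$ is exactly what lets the same class serve as a period point on both $X$ and $X_{(p)}$, so that both invariants are computed in one chamber and no wall-crossing term enters.

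The genuine gap is at the step you yourself flag as the crux, and the justification you offer for it is not correct. You assert that the moduli space over the cap $C_p$ reduces to a single point ``together with the orthogonality of the period point''; but the orthogonality of $H$ is purely a chamber statement and has no bearing whatsoever on solutions over $C_p$. The two inputs that actually make the cap contributions match are absent from your outline. First, $L(p^2,p-1)$ carries a metric of positive scalar curvature, so every critical point of the Chern--Simons--Dirac functional on it is a reducible flat connection, and a Weitzenb\"{o}ck argument over the negative-definite caps is what excludes irreducible finite-energy solutions there; you defer this to a closing remark, but it is the engine of the whole compactness picture. Second, and more seriously, ``identical boundary data on $Y$'' cannot by itself force the two caps to contribute equally: the dimension condition $d_{X_{(p)}}(K)=d_X(\tilde{K})$ built into the definition of a lift (Theorem~\ref{thm:4.1}), which unwinds to $(\tilde{K}\rvert_{C_p})^2=1-p$, is what equates the APS index over $C_p$ at the common flat limit with the index over $B_p$, and hence equates the cut-down dimensions of the moduli spaces over $W$. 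A concrete illustration of why this is needed: on $C_2$ (the disk bundle of Euler number $-4$), the characteristic elements with $\tilde{K}(u_1)=2$ and $\tilde{K}(u_1)=6$ have the same restriction to $\partial C_2=L(4,1)$ but squares $-1$ and $-9$, so they carry the same boundary data while giving different indices over the cap; only the first can be a lift, and only for it does the count over $W$ reproduce $SW_{X,H}(\tilde{K})$. Your argument, as written, would apply equally to both, which shows the step fails without invoking the index condition.
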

\begin{theorem}[{Fintushel-Stern \cite{FS1}, cf.~Park \cite{P1}}]\label{thm:4.2}
If a characteristic element $\tilde{K}$ of $H^2(X;\mathbf{Z})$ satisfies that $(\tilde{K}\rvert _{C_p})^2=1-p$ and 
$\tilde{K}\rvert _{\partial C_p}=mp\in \mathbf{Z}_{p^2}\cong H^2(\partial C_p;\mathbf{Z})$ 
with $m\equiv p-1\pmod 2$, then there exists a characteristic element $K$ of $H^2(X_{(p)};\mathbf{Z})$ such that $\tilde{K}$ is a lift of $K$. 
\end{theorem}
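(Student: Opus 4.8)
The plan is to construct the desired characteristic element $K$ by a gluing argument. Write $N:=X-\operatorname{int}C_p=X_{(p)}-\operatorname{int}B_p$ for the common complement, so that $X=N\cup_Y C_p$ and $X_{(p)}=N\cup_Y B_p$ with $Y:=\partial C_p=\partial B_p\cong L(p^2,p-1)$ and $H^2(Y;\mathbf{Z})\cong\mathbf{Z}_{p^2}$. Since a lift must satisfy $K|_N=\tilde{K}|_N$, the first task is to show that $\tilde{K}|_N$ extends to an integral class on $X_{(p)}$. I would first determine the image of the restriction map $H^2(B_p;\mathbf{Z})\to H^2(Y;\mathbf{Z})\cong\mathbf{Z}_{p^2}$: from the handle decomposition of $B_p$ (one $0$-, one $1$-, and one $2$-handle) one gets $H_1(B_p;\mathbf{Z})\cong\mathbf{Z}/p$ and $H_2(B_p;\mathbf{Z})=0$, hence $H^2(B_p;\mathbf{Z})\cong\mathbf{Z}/p$ by universal coefficients, and the long exact sequence of $(B_p,Y)$ together with Lefschetz duality identifies this image with the unique order-$p$ subgroup of $\mathbf{Z}_{p^2}$, i.e.\ the multiples of $p$. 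The hypothesis $\tilde{K}|_{\partial C_p}=mp$ then says exactly that $\tilde{K}|_Y$ lies in this image, so there is $\beta\in H^2(B_p;\mathbf{Z})$ with $\beta|_Y=\tilde{K}|_Y$; running the Mayer--Vietoris sequence for $X_{(p)}=N\cup_Y B_p$, the pair $(\tilde{K}|_N,\beta)$ then lifts to a class $K\in H^2(X_{(p)};\mathbf{Z})$ with $K|_N=\tilde{K}|_N$ and $K|_{B_p}=\beta$. This yields the restriction identity $K|_{X_{(p)}-\operatorname{int}B_p}=\tilde{K}|_{X-\operatorname{int}C_p}$ required in the definition of a lift.

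Next I would check that $K$ is characteristic on $X_{(p)}$. Because $K|_N=\tilde{K}|_N$ and $\tilde{K}$ is characteristic on $X=N\cup_Y C_p$, the class $K$ restricts to a characteristic element on the piece $N$, so the problem reduces to verifying that $K|_{B_p}$ is a characteristic element of $B_p$. Here the explicit handlebody determines $w_2(B_p)$ and parametrizes the characteristic elements of $B_p$ by their boundary residues, and the parity hypothesis $m\equiv p-1\pmod 2$ is precisely the condition guaranteeing that $\beta=K|_{B_p}$ is characteristic. I expect this characteristic bookkeeping on $B_p$ to be the main obstacle: matching $w_2(B_p)$ against the residue $mp\in\mathbf{Z}_{p^2}$ requires working with self-intersections of torsion classes in $\mathbf{Q}/\mathbf{Z}$ rather than a purely homological computation, and one must also control the gluing ambiguity coming from $H^1(Y;\mathbf{Z}/2)$ (which vanishes when $p$ is odd, as in all of our applications).

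Finally I would verify the equality of $d$-invariants. From $e(C_p)=p$, $\sigma(C_p)=-(p-1)$ and $e(B_p)=1$, $\sigma(B_p)=0$, the additivity of the Euler characteristic and Novikov additivity give $e(X_{(p)})-e(X)=-(p-1)$ and $\sigma(X_{(p)})-\sigma(X)=p-1$. Since $K|_N=\tilde{K}|_N$, both classes restrict to $Y$ as the torsion element $mp$, so their self-intersections split as an identical $N$-contribution plus the piece over $C_p$ (resp.\ $B_p$); as $B_p$ is a rational homology ball we have $(K|_{B_p})^2=0$, while the hypothesis gives $(\tilde{K}|_{C_p})^2=1-p$, whence $K^2-\tilde{K}^2=0-(1-p)=p-1$. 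Substituting into
$d_{X_{(p)}}(K)-d_X(\tilde{K})=\tfrac14\bigl[(K^2-\tilde{K}^2)-2(e(X_{(p)})-e(X))-3(\sigma(X_{(p)})-\sigma(X))\bigr]=\tfrac14\bigl[(p-1)+2(p-1)-3(p-1)\bigr]=0$
shows $d_{X_{(p)}}(K)=d_X(\tilde{K})$. Together with the restriction identity, this exhibits $\tilde{K}$ as a lift of the characteristic element $K$, which completes the proof.
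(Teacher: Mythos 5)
First, a point of comparison: the paper does not prove this statement at all --- it is quoted as well known from Fintushel--Stern \cite{FS1} (cf.\ Park \cite{P1}) --- so your proposal can only be judged on its own terms. Your outer two steps are essentially correct. The extension step is fine: $H^2(B_p;\mathbf{Z})\cong\mathbf{Z}/p$ injects into $H^2(Y;\mathbf{Z})\cong\mathbf{Z}_{p^2}$ with image the order-$p$ subgroup, and Mayer--Vietoris produces $K$ with $K|_N=\tilde{K}|_N$. The $d$-invariant step is also fine, granted the standard facts that squares split rationally across a rational homology sphere and that the torsion class $K|_{B_p}$ has rational square zero; your arithmetic $\frac14[(p-1)+2(p-1)-3(p-1)]=0$ is correct.

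The genuine gap is your second step: you never verify that $K$ is characteristic, and this is not peripheral bookkeeping but the entire content of the theorem. Note that there is no room to maneuver: since $H^1(Y;\mathbf{Z})=0$, the restriction map $H^2(X_{(p)};\mathbf{Z})\to H^2(N;\mathbf{Z})\oplus H^2(B_p;\mathbf{Z})$ is injective, so the class $K$ satisfying the lift condition $K|_N=\tilde{K}|_N$ is \emph{unique}; the theorem is therefore logically equivalent to the claim you defer as ``the main obstacle,'' and the parity hypothesis $m\equiv p-1\pmod 2$ --- the only hypothesis you never actually use --- can enter nowhere else. For $p$ odd the gap closes in two lines, and you should have closed it: $H^2(B_p;\mathbf{Z}/2)=0$ and $H^1(Y;\mathbf{Z}/2)=0$, so in the mod-$2$ Mayer--Vietoris sequence both $K\bmod 2$ and $w_2(X_{(p)})$ restrict to $w_2(N)$ on $N$ and to $0$ on $B_p$, hence coincide. (Consistently, for odd $p$ the parity condition is vacuous: $mp=(m+p)p$ in $\mathbf{Z}_{p^2}$ and $m$, $m+p$ have opposite parities, so a valid representative $m$ can always be chosen.) The substantive case is $p$ even, where the parity of $m$ is well defined and the hypothesis is a real constraint. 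There one must compute $w_2(B_p)$ from the handlebody (in fact $B_p$ is not spin for even $p$ --- if it were, the characteristic elements of $B_p$ would restrict to the classes $mp$ with $m$ even, contradicting the statement) and show that the unique $\beta\in H^2(B_p;\mathbf{Z})$ with $\beta|_Y=mp$ reduces mod $2$ to $w_2(B_p)$ exactly when $m$ is odd; one must also dispose of the $H^1(Y;\mathbf{Z}/2)\cong\mathbf{Z}/2$ ambiguity you flag, which vanishes not because the group is zero but because $H^1(B_p;\mathbf{Z}/2)\to H^1(Y;\mathbf{Z}/2)$ is onto, so the connecting homomorphism into $H^2(X_{(p)};\mathbf{Z}/2)$ is zero. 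None of this even-$p$ analysis --- which is precisely the computation carried out in the cited sources via spin$^c$ structures on $B_p$ and $L(p^2,p-1)$ --- appears in your proposal, so as written it establishes the theorem only for odd $p$ (enough for this paper's applications, where $p=4a-9$ or $4a-7$, but not for the statement as given).
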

\begin{corollary}\label{cor:4.3}
If a characteristic element $\tilde{K}$ of $H^2(X;\mathbf{Z})$ satisfies $\tilde{K}(u_1)=\tilde{K}(u_2)=\dots=\tilde{K}(u_{p-2})=0$ and $\tilde{K}(u_{p-1})=\pm p$, then $\tilde{K}$ is a lift of some characteristic element $K$ of $H^2(X_{(p)};\mathbf{Z})$.
\end{corollary}
\section{Computations of SW invariants}
In this section, we prove our manifolds are homeomorphic but not diffeomorphic to rational surfaces. We denote the symbol $R_n$ as $\mathbf{CP}^2\# n\overline{\mathbf{C}\mathbf{P}^2}$. 
\begin{lemma}\label{lem:5.1}
$(1)$ For $3\leq a\leq 7$, there exists a characteristic element $K_{a,3}$ of $H^2(X_{a,3};\mathbf{Z})$ such that $SW_{X_{a,3}}(\pm K_{a,3})=\pm 1$. \\
$(2)$ For $3\leq a\leq 6$, there exists a characteristic element $K'_{a,3}$ of $H^2(X'_{a,3};\mathbf{Z})$ such that $SW_{X'_{a,3}}(\pm K'_{a,3})=\pm 1$. 
\end{lemma}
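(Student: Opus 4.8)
The plan is to exhibit, for each manifold $X_{a,3}$ and $X'_{a,3}$, an explicit characteristic element of the ambient blown-up rational surface that (i) survives as a lift under the rational blowdown via Corollary~\ref{cor:4.3}, and (ii) has a computable, nonzero Seiberg--Witten invariant downstairs via Theorem~\ref{nthm:4.1}. First I would recall that $X_{a,3}$ is the rational blowdown of $R_{3a+2}=\mathbf{CP}^2\# (3a+2)\overline{\mathbf{C}\mathbf{P}^2}$ along the copy of $C_{4a-9}$ described in Corollary~\ref{cor:C_p}.(1), whose classes $u_1,\dots,u_{4a-10}$ are listed there. The strategy is dictated by Corollary~\ref{cor:4.3}: I need a characteristic $\tilde{K}$ on $R_{3a+2}$ with $\tilde{K}(u_i)=0$ for $1\le i\le 4a-11$ and $\tilde{K}(u_{4a-10})=\pm(4a-9)$, so that $\tilde{K}$ is a lift of some characteristic $K$ on $X_{a,3}$.

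The key computational step is to write down $\tilde{K}_{a,3}$ as an explicit integer combination of the basis $h,e_1,\dots,e_{3a+2}$. Since $u_i=e_{12-a+i}-e_{13-a+i}$ for $1\le i\le 4a-11$, the condition $\tilde{K}(u_i)=0$ forces the coefficients of the relevant consecutive $e_j$'s to be equal; this propagates a single constant across a block of exceptional classes. I would then choose the remaining coefficients (the coefficient of $h$, of $e_1$, and of the endpoint classes) so that $\tilde{K}$ is characteristic (all coefficients odd, matching $h^2=1$, $e_j^2=-1$) and so that $\tilde{K}(u_{4a-10})=\pm(4a-9)$ using the formula for $u_{4a-10}=(a+3)h-(a-1)e_1-2e_2-\dots-2e_{3a+1}-e_{3a+2}$. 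Guided by the fact that the SW basic classes of $R_{3a+2}$ are the canonical-class-type elements $\pm(3h-\sum e_j)$ (reflecting the rational surface structure), the natural candidate is $\tilde{K}_{a,3}=\pm(3h-e_1-e_2-\dots-e_{3a+2})$, and I would verify that this particular element meets both the orthogonality and the $\pm(4a-9)$ conditions; if it does not meet them on the nose, a small modification by classes in the span of the $u_i$ that preserves the SW value is the fallback.

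Having produced a valid lift, I would compute $SW_{R_{3a+2}}(\tilde{K}_{a,3})$. Because $b_2^-=3a+2\le 9$ exactly when $a\le 2$ — which is outside our range — I cannot immediately appeal to the $H$-independence noted in Section~4; instead I must apply Theorem~\ref{nthm:4.1} with an $H$ orthogonal to $H_2(C_{4a-9};\mathbf{R})$ and track the wall-crossing formula to pin down the value on the correct side of the wall. For the rational surface $R_{3a+2}=\mathbf{CP}^2\# (3a+2)\overline{\mathbf{C}\mathbf{P}^2}$, the small-perturbation invariant of the canonical-type class is known (it equals $\pm1$ for the standard structure, as in the Fintushel--Stern and Park computations cited in Section~4), and the blowdown formula then transfers this to give $SW_{X_{a,3}}(\pm K_{a,3})=\pm1$. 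The argument for $X'_{a,3}$ is entirely parallel, using Corollary~\ref{cor:C_p}.(2) and the class $u_{4a-8}=(a+3)h+e_1+e_2-(a-1)e_3-2e_4-\dots-2e_{3a+3}-e_{3a+4}$, with candidate lift $\tilde{K}'_{a,3}=\pm(3h-e_1-\dots-e_{3a+4})$.

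The main obstacle I anticipate is the Seiberg--Witten computation on the side of the wall after blowdown, not the algebra of finding the lift. Since $b_2^->9$ for every $a$ in our range, the invariant genuinely depends on the chamber, so I must argue carefully that the chamber selected by ``$H$ orthogonal to $H_2(C_p;\mathbf{R})$'' in Theorem~\ref{nthm:4.1} is the one in which the rational surface carries a nonzero invariant, and that the wall-crossing contribution does not cancel it. Concretely, I expect to choose $H$ close to the positive cone of the blowdown (pushing $H$ toward the orthogonal complement of the $u_i$), check that $\tilde{K}_{a,3}\cdot H$ has the expected sign, and invoke the wall-crossing term $(-1)^{\frac{1}{2}d(\tilde{K})}$ with $d_{X_{a,3}}(K)=0$ — which holds precisely when $K^2=2e+3\sigma$, a condition I would verify for the explicit $K$ — so that the crossing contributes $\pm1$ rather than $0$. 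This $d=0$ dimension count is what ultimately produces the clean value $\pm1$ and is the delicate point to get right.
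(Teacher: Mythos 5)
Your overall strategy is exactly the paper's: take a canonical-type characteristic class as an explicit lift, verify the hypotheses of Corollary~\ref{cor:4.3}, compute the upstairs invariant by wall-crossing from the chamber of $PD(h)$ (where every Seiberg--Witten invariant of a rational surface vanishes) using $d=0$, and transfer the value downstairs by Theorem~\ref{nthm:4.1}. For part (1) this plan succeeds verbatim: with $\tilde{K}_{a,3}=PD(3h-e_1-\dots-e_{3a+2})$ one gets $\tilde{K}_{a,3}(u_i)=0$ for $1\leq i\leq 4a-11$ and $\tilde{K}_{a,3}(u_{4a-10})=3(a+3)-(a-1)-6a-1=-(4a-9)$, so Corollary~\ref{cor:4.3} applies; the paper then exhibits the explicit chamber $H_{a,3}=PD((8a-1)h-2(a+3)e_1-(a+3)e_2-\dots-(a+3)e_{3a+2})$, which is orthogonal to $H_2(C_{4a-9};\mathbf{R})$ and satisfies $\tilde{K}_{a,3}\cdot H_{a,3}=-3(a-2)^2<0$ while $\tilde{K}_{a,3}\cdot PD(h)=3>0$, so exactly one wall is crossed and $SW_{R_{3a+2},H_{a,3}}(\pm\tilde{K}_{a,3})=\pm 1$. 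Your proposal leaves these verifications as promissory notes, but they all go through.

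Part (2), however, fails as you propose it. With $u_{4a-8}=(a+3)h+e_1+e_2-(a-1)e_3-2e_4-\dots-2e_{3a+3}-e_{3a+4}$, your candidate $\tilde{K}'_{a,3}=PD(3h-e_1-e_2-\dots-e_{3a+4})$ gives $\tilde{K}'_{a,3}(u_{4a-8})=3(a+3)+1+1-(a-1)-6a-1=-(4a-11)$, which is never $\pm(4a-7)$ for $3\leq a\leq 6$; so this class does not satisfy Corollary~\ref{cor:4.3} and is not a lift. Moreover, your fallback -- correcting by classes in the span of the $u_i$ -- cannot repair this. The correction actually needed is $2(e_1+e_2)$: the paper's lift is $PD(3h+e_1+e_2-e_3-e_4-\dots-e_{3a+4})$, with the signs of $e_1,e_2$ flipped so as to match the $+e_1+e_2$ terms of $u_{4a-8}$, which yields $\tilde{K}'_{a,3}(u_{4a-8})=-(4a-7)$ as required. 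But $2(e_1+e_2)$ does not lie in the span of $u_1,\dots,u_{4a-8}$: the classes $u_1,\dots,u_{4a-9}$ involve only $e_j$ with $j\geq 13-a\geq 7$, and any combination with a nonzero $u_{4a-8}$-component has nonzero $h$-coefficient. The repair is easy once spotted (the sign flip leaves the square, hence the condition $d=0$, and the whole wall-crossing argument unchanged), but it must be found by matching the coefficients of $u_{4a-8}$, not by the modification you describe.
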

\begin{proof}
(1) Let $\tilde{K}_{a,3}\in H^2(R_{3a+2};\mathbf{Z})$ and $H_{a,3}\in H_+^2(R_{3a+2};\mathbf{R})$ be the elements defined by $\tilde{K}_{a,3}=PD(3h-e_1-e_2-\dots-e_{3a+2})$ and $H_{a,3}=PD((8a-1)h-2(a+3)e_1-(a+3)e_2-(a+3)e_3-\dots-(a+3)e_{3a+2})$. Here the symbol $PD$ denotes the Poincar\'{e} dual. It is well known that $SW_{R_{n},PD(h)}(\tilde{K})=0$ for every $n\geq 0$ and every characteristic element $\tilde{K}$ of $H^2(R_n;\mathbf{Z})$. 
Applying the wall-crossing formula to $\pm \tilde{K}_{a,3}$, $H_{a,3}$ and $PD(h)$, we have $SW_{R_{3a+2},H_{a,3}}(\pm \tilde{K}_{a,3})=\pm 1$. 
Corollary~\ref{cor:4.3} shows that $\tilde{K}_{a,3}$ is a lift of some characteristic element $K_{a,3}$ of $H^2(X_{a,3};\mathbf{Z})$. Since $H_{a,3}$ is orthogonal to to the subspace $H_2(C_{4a-9};\mathbf{R})$ of $H_2(R_{3a+2};\mathbf{R})$, Theorem~\ref{nthm:4.1} shows $SW_{X_{a,3}}(\pm K_{a,3})=SW_{R_{3a+2},H_{a,3}}(\pm \tilde{K}_{a,3})=\pm 1$. 

(2) Let $\tilde{K}'_{a,3}\in H^2(R_{3a+2};\mathbf{Z})$ and $H'_{a,3}\in H_+^2(R_{3a+2};\mathbf{R})$ be the elements defined by $\tilde{K}_{a,3}=PD(3h+e_1+e_2-e_3-e_4-\dots-e_{3a+4})$ and $H'_{a,3}=PD((8a+1)h+(a+3)e_1+(a+3)e_2-2(a+3)e_3-(a+3)e_4-(a+3)e_5-\dots-(a+3)e_{3a+4})$. We now can prove the required claim, similarly to the proof of (1). 
\end{proof}
The corollary below follows from the fact $SW_{\mathbf{CP}^2\# n\overline{\mathbf{C}\mathbf{P}^2}}\equiv 0$ $(0\leq n\leq 9)$. 
\begin{corollary}
$(1)$ For $3\leq a\leq 7$, the $4$-manifold $X_{a,3}$ is not diffeomorphic to $\mathbf{CP}^2\# (12-a)\overline{\mathbf{C}\mathbf{P}^2}$. \\
$(2)$ For $3\leq a\leq 6$, the $4$-manifold $X'_{a,3}$ is not diffeomorphic to $\mathbf{CP}^2\# (12-a)\overline{\mathbf{C}\mathbf{P}^2}$. 
\end{corollary}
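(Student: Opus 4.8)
The plan is to use the fact that the small-perturbation Seiberg-Witten invariant is a diffeomorphism invariant: Lemma~\ref{lem:5.1} has already exhibited a nonvanishing value for each $X_{a,3}$ and $X'_{a,3}$, whereas the standard rational surfaces in the relevant range have identically vanishing invariants, so the two families cannot be diffeomorphic. First I would record that the invariant is genuinely well-defined (chamber-independent) for the manifolds in question. By Proposition~\ref{prop:homeo} we have $b_2^+(X_{a,3})=1$ and $b_2^-(X_{a,3})=12-a$; since $a\geq 3$ this gives $b_2^-(X_{a,3})\leq 9$, and similarly $b_2^-(X'_{a,3})\leq 9$. As noted in Section~4, the wall-crossing formula then forces $SW_{X,H}(K)$ to be independent of $H$, so $SW_{X_{a,3}}\colon \mathcal{C}(X_{a,3})\to\mathbf{Z}$ and $SW_{X'_{a,3}}\colon \mathcal{C}(X'_{a,3})\to\mathbf{Z}$ are well-defined functions, invariant (up to the overall sign coming from a choice of homology orientation) under diffeomorphism.

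Next I would invoke Lemma~\ref{lem:5.1}: it produces a characteristic element on which $SW_{X_{a,3}}$ (resp.\ $SW_{X'_{a,3}}$) equals $\pm 1$, so these invariants are not identically zero. For part~(1) the target manifold is $\mathbf{CP}^2\#(12-a)\overline{\mathbf{C}\mathbf{P}^2}$ with $5\leq 12-a\leq 9$, and for part~(2) it is $\mathbf{CP}^2\#(12-a)\overline{\mathbf{C}\mathbf{P}^2}$ with $6\leq 12-a\leq 9$; in both cases $12-a\leq 9$, so the stated fact $SW_{\mathbf{CP}^2\# n\overline{\mathbf{C}\mathbf{P}^2}}\equiv 0$ $(0\leq n\leq 9)$ applies. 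A diffeomorphism $X_{a,3}\to\mathbf{CP}^2\#(12-a)\overline{\mathbf{C}\mathbf{P}^2}$ would induce an isomorphism on $H^2$ carrying characteristic elements to characteristic elements and identifying the two SW functions up to sign; but one is nontrivial and the other is identically zero, a contradiction. The same argument applies verbatim to $X'_{a,3}$, completing part~(2).

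Since essentially all the analytic content was already packaged into Lemma~\ref{lem:5.1} and the homeomorphism statement of Proposition~\ref{prop:homeo}, there is no substantial obstacle remaining. The only point genuinely requiring care is verifying that $b_2^-\leq 9$ for each manifold, so that the invariant is chamber-independent and hence a bona fide diffeomorphism invariant; this is exactly what makes the comparison against the vanishing invariants of the standard rational surfaces legitimate rather than an artifact of a particular choice of period point $H$.
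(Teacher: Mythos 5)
Your proposal is correct and follows exactly the paper's route: the paper deduces the corollary from Lemma~5.1 together with the vanishing $SW_{\mathbf{CP}^2\# n\overline{\mathbf{C}\mathbf{P}^2}}\equiv 0$ for $0\leq n\leq 9$, which is precisely your argument. Your additional care in checking $b_2^-\leq 9$ (so the invariant is chamber-independent and hence a genuine diffeomorphism invariant) is a detail the paper leaves implicit in its Section~4 discussion, but it is the same proof.
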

This corollary together with Propositions~\ref{prop:homeo} and \ref{prop:handle} shows the following main theorem. 
\begin{theorem}
$(1)$ For $3\leq a\leq 7$, the smooth $4$-manifold $X_{a,3}$ is homeomorphic but not diffeomorphic to $\mathbf{CP}^2\# (12-a)\overline{\mathbf{C}\mathbf{P}^2}$. Furthermore, $X_{a,3}$ $(3\leq a\leq 6)$ admits a handle decomposition without $1$-handles, namely, 
\begin{equation*}
X_{a,3}=\text{one $0$-handle} \cup \text{$(14-a)$ $2$-handles} \cup \text{two $3$-handles} \cup \text{one $4$-handle}.
\end{equation*}
$(2)$ For $3\leq a\leq 6$, the smooth $4$-manifold $X'_{a,3}$ is homeomorphic but not diffeomorphic to $\mathbf{CP}^2\# (12-a)\overline{\mathbf{C}\mathbf{P}^2}$. Furthermore, 

$(\textnormal{i})$ $X'_{a,3}$ $(3\leq a\leq 5)$ admits a handle decomposition without $1$- and $3$-handles, namely, 
\begin{equation*}
X'_{a,3}=\text{one $0$-handle} \cup \text{$(12-a)$ $2$-handles} \cup \text{one $4$-handle};
\end{equation*}

$(\textnormal{ii})$ $X'_{a,3}$ $(a=6)$ admits a handle decomposition without $3$-handles, namely, 
\begin{equation*}
X'_{a,3}=\text{one $0$-handle} \cup \text{one $1$-handle} \cup \text{$(13-a)$ $2$-handles} \cup \text{one $4$-handle}. 
\end{equation*}
\end{theorem}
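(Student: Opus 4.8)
The plan is to assemble the final theorem entirely from results already established earlier in the paper, so the ``proof'' is essentially a bookkeeping exercise that packages three separate strands into a single statement. The three strands are: the homeomorphism type (Proposition~\ref{prop:homeo}), the non-diffeomorphism (the Corollary immediately preceding, which rests on Lemma~\ref{lem:5.1} and the vanishing of $SW$ for $\mathbf{CP}^2\# n\overline{\mathbf{C}\mathbf{P}^2}$), and the handle-decomposition data (Proposition~\ref{prop:handle}). First I would recall that Proposition~\ref{prop:homeo} gives, for every $a$ in the relevant range, that $X_{a,3}$ and $X'_{a,3}$ are homeomorphic to $\mathbf{CP}^2\#(12-a)\overline{\mathbf{C}\mathbf{P}^2}$; this already covers the topological half of both (1) and (2).

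Next I would invoke the preceding Corollary to supply the smooth obstruction. Lemma~\ref{lem:5.1} produces, for $3\le a\le 7$ (resp.\ $3\le a\le 6$), a characteristic element with $SW_{X_{a,3}}(\pm K_{a,3})=\pm1$ (resp.\ for the primed manifolds). Since $SW$ vanishes identically on $\mathbf{CP}^2\# n\overline{\mathbf{C}\mathbf{P}^2}$ for $0\le n\le9$, and since the Seiberg--Witten invariant is a diffeomorphism invariant (well-defined here because $b_2^-\le9$, as noted after the wall-crossing formula), the two manifolds cannot be diffeomorphic. Combining this with the homeomorphism statement yields ``homeomorphic but not diffeomorphic'' for the stated ranges. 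The only mild subtlety is matching the index ranges: the non-diffeomorphism part holds for $3\le a\le 7$ in the unprimed case, but the handle-theoretic refinement in part~(1) is only claimed for $3\le a\le 6$, so I would state the exotic conclusion on the full range $3\le a\le 7$ and then append the handlebody statement on its narrower subrange.

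Finally, I would read off the handle decompositions verbatim from Proposition~\ref{prop:handle}: part~(1) of that proposition gives the $1$-handle-free decomposition $X_{a,3}=\text{one }0\text{-handle}\cup(14-a)\,2\text{-handles}\cup\text{two }3\text{-handles}\cup\text{one }4\text{-handle}$ for $3\le a\le6$, while parts~(2) and~(3) give the two cases for $X'_{a,3}$, split according to whether $a\le5$ (no $1$- or $3$-handles) or $a=6$ (no $3$-handles, one $1$-handle). Substituting these into the theorem statement completes the argument. I do not expect any genuine obstacle here: every ingredient is already proved, and the work is purely organizational. If there is any delicate point at all, it is simply ensuring the quantifier ranges are transcribed consistently across the three source results, since they do not all share the same range of $a$; a careless splice could overstate the handlebody conclusion beyond where Lemma~\ref{without-handle} was actually applied.
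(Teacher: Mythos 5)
Your proposal is correct and follows exactly the paper's own argument: the paper derives this theorem by combining the preceding Corollary (non-diffeomorphism via Lemma~\ref{lem:5.1} and the vanishing of the Seiberg--Witten invariants of rational surfaces) with Propositions~\ref{prop:homeo} and \ref{prop:handle}, precisely as you describe. Your attention to the mismatch of index ranges (exoticness for $3\leq a\leq 7$ versus the $1$-handle-free decomposition only for $3\leq a\leq 6$) is also consistent with how the theorem is stated.
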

\begin{remark}
(1) We can prove similarly to \cite{Y2} that the Seiberg-Witten basic classes of $X_{a,3}$ (resp.\ $X'_{a,3}$) are only $\pm K_{a,3}$ (resp.\ $\pm K'_{a,3}$), at least for $a=3,4$. Therefore $X_{a,3}$ and $X'_{a,3}$ $(a=3,4)$ are minimal 4-manifolds, that is, they contain no smooth 2-sphere with self intersection number $-1$. It seems that they are minimal for every $a$. Perhaps, one may show the minimality by the method in Ozsv\'{a}th-Szab\'{o}~\cite{OS}. \\
(2) Constructions of exotic rational surfaces in \cite{Y2} and this paper are suitable for finding corks. See \cite{AY}. 
\end{remark}
\section{Strategy toward exotic $\mathbf{CP}^2\# \overline{\mathbf{C}\mathbf{P}^2}$}\label{section:idea}
In this section, we propose a strategy for rational blowdown constructions of exotic $\mathbf{CP}^2\# n\overline{\mathbf{C}\mathbf{P}^2}$ $(1\leq n\leq 9)$, though the author could not carry out the strategy for $1\leq n\leq 4$. Our constructions of exotic rational surfaces in \cite{Y2} and Section~\ref{section:construction} are based on the strategy. 
We begin with the question below. 
\begin{question}\label{question:handle}
Does $\mathbf{CP}^2\# 2\overline{\mathbf{C}\mathbf{P}^2}$ admit a handle decomposition as in Figure~\ref{fig13}, for some $a,b\geq 3$ ?
\begin{figure}[h]
\begin{center}
\includegraphics[width=2.2in]{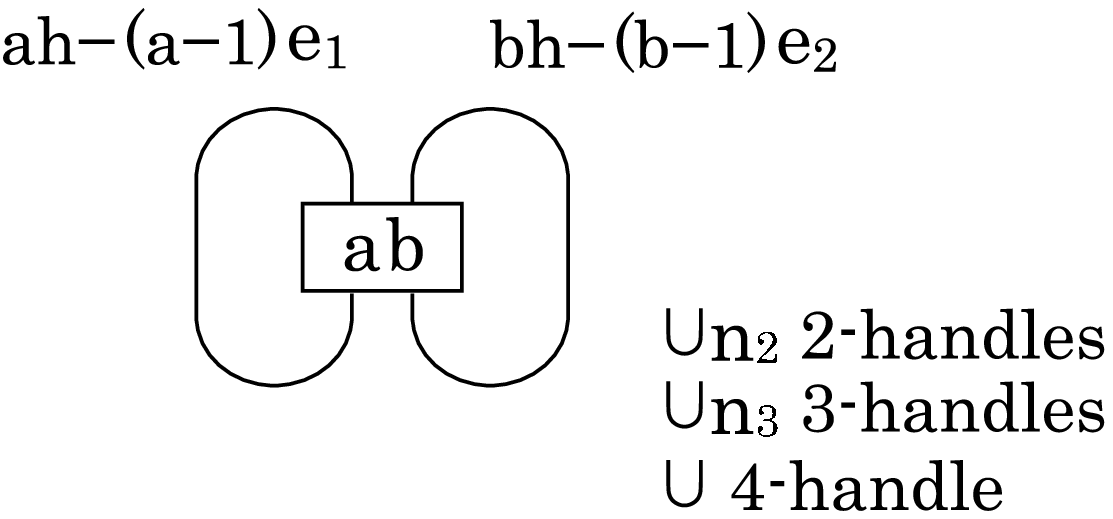}
\caption{}
\label{fig13}
\end{center}
\end{figure}

\end{question}

\begin{remark}(1) Question~\ref{question:handle} seems to be natural, because the elements $ah-(a-1)e_1$ and $bh-(b-1)e_2$ of $H_2(\mathbf{CP}^2\# 2\overline{\mathbf{C}\mathbf{P}^2};\mathbf{Z})$ are represesnted by smooth $2$-spheres in $\mathbf{CP}^2\# 2\overline{\mathbf{C}\mathbf{P}^2}$.\\
(2) One can easily check that Question~\ref{question:handle} is affirmative when $a=1$ and $b$ is an arbitraly positive number. We can also give an affirmative solution when $a=2$ and $b$ is an arbitraly positive number, by blowing up in Figure~\ref{fig2}.
\end{remark}

For some numbers $a$ and $b$, an affirmative solution to Question~\ref{question:handle} gives a construction of exotic $\mathbf{CP}^2\# n\overline{\mathbf{C}\mathbf{P}^2}$ $(2\leq n\leq 9)$ as follows. 

\begin{proposition}\label{prop:configuration from handle}
Suppose that Question~$\ref{question:handle}$ is affirmative. Then $\mathbf{CP}^2\# (ab+2)\overline{\mathbf{C}\mathbf{P}^2}$ admits a handle decomposition as in Figure~$\ref{fig14}$. 
\begin{figure}[h!]
\begin{center}
\includegraphics[width=3.7in]{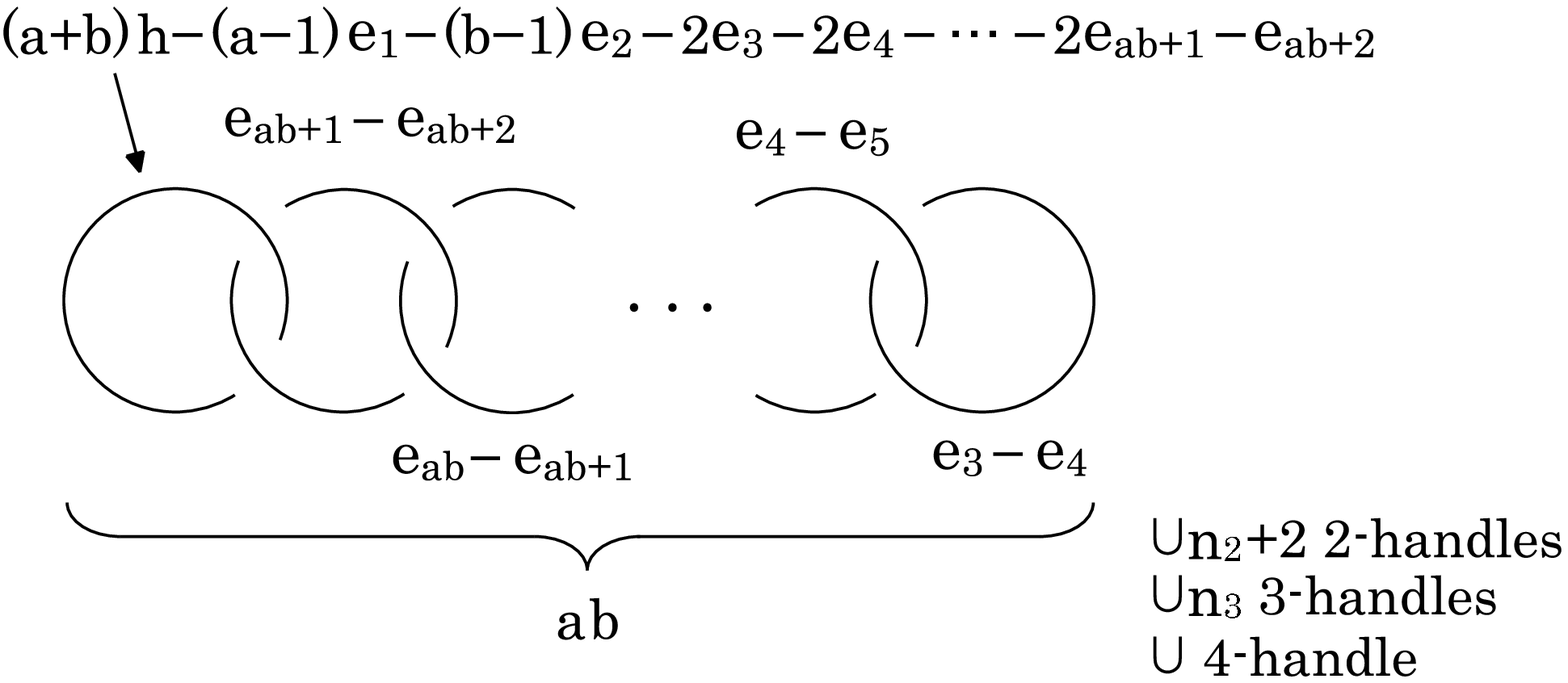}
\caption{}
\label{fig14}
\end{center}
\end{figure}
\end{proposition}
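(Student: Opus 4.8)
The plan is to mimic, for general $b$, the explicit blow-up and handle-slide procedure carried out for $b=3$ in the proof of Proposition~\ref{prop:construction}. Assuming Question~\ref{question:handle} is affirmative, we are handed the handle decomposition of $\mathbf{CP}^2\#2\overline{\mathbf{CP}^2}$ drawn in Figure~\ref{fig13}; by the Remark it contains two $2$-handles whose homology classes are $ah-(a-1)e_1$ and $bh-(b-1)e_2$, both represented by embedded spheres. The first observation I would record is that these two classes meet with intersection number $(ah-(a-1)e_1)\cdot(bh-(b-1)e_2)=ab$, since $h^2=1$, $e_1^2=e_2^2=-1$, and $h\cdot e_i=e_1\cdot e_2=0$. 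After an isotopy of the diagram the two attaching circles therefore cross at $ab$ points, and it is precisely this $ab$-fold intersection that the blow-ups are designed to resolve.

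First I would perform a sequence of $ab$ blow-ups, carried out in stages exactly as in the passage Figure~\ref{fig3}$\to$Figure~\ref{fig6} but with the parameter $b$ left general, interleaving the handle slides so that the two positive-square spheres are successively converted into a single long linear chain together with the auxiliary $2$-handles visible in Figure~\ref{fig14}. Each blow-up introduces one new $\overline{\mathbf{CP}^2}$ summand and subtracts the corresponding exceptional class from the relevant $2$-handles, lowering their self-intersections and their mutual intersection; starting from $\mathbf{CP}^2\#2\overline{\mathbf{CP}^2}$ and performing $ab$ of them yields the ambient manifold $\mathbf{CP}^2\#(ab+2)\overline{\mathbf{CP}^2}$ asserted in the statement. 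A final isotopy then puts the diagram into the form of Figure~\ref{fig14}. As a consistency check I would verify that, upon specialising to $b=3$, the chain produced this way reproduces the classes $u_1,\dots,u_{4a-10}$ already recorded in Corollary~\ref{cor:C_p}; this pins down the homology classes one should attach to the handles of Figure~\ref{fig14} and lets me confirm them inductively by tracking how $ah-(a-1)e_1$ and $bh-(b-1)e_2$ evolve under each blow-up and slide.

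The hard part will be the Kirby-calculus bookkeeping: choosing the order of the $ab$ blow-ups and the accompanying slides so that the diagram closes up into \emph{precisely} Figure~\ref{fig14}, and certifying that every slide is legal and that all framings and homology classes emerge exactly as drawn. This is the same (and essentially the only) difficulty already present in the proof of Proposition~\ref{prop:construction}, now executed with $b$ arbitrary rather than fixed at $3$. A secondary point to address is that Question~\ref{question:handle} supplies only an \emph{abstract} handle diagram; one must ensure that the geometric crossing pattern needed to begin the blow-ups is genuinely available, but this is forced by the computed intersection number $ab$ together with the fact that both classes are represented by embedded spheres.
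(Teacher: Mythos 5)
Your overall plan coincides with the paper's: the paper's entire proof is ``Slide handles and blow up as in Figure~\ref{fig17}'', i.e.\ precisely the generalization of the blow-up/slide procedure of Proposition~\ref{prop:construction} to arbitrary $b$, starting from the hypothesized Figure~\ref{fig13}. (Minor slip: the $b=3$ model is the passage from Figure~\ref{fig2} to Figure~\ref{fig6}; Figure~\ref{fig3} is the input for Figure~\ref{fig7}.) However, the mechanism you describe for the $ab$ blow-ups is not the right one, and executed literally it cannot terminate in Figure~\ref{fig14}. You propose to blow up the $ab$ mutual intersection points of the two spheres, each blow-up ``subtracting the corresponding exceptional class from the relevant $2$-handles'' and lowering their mutual intersection. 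Doing this $ab$ times replaces $ah-(a-1)e_1$ and $bh-(b-1)e_2$ by $ah-(a-1)e_1-e_3-\cdots-e_{ab+2}$ and $bh-(b-1)e_2-e_3-\cdots-e_{ab+2}$: two \emph{disjoint} spheres of squares $2a-1-ab$ and $2b-1-ab$, each meeting every exceptional sphere once. For $a,b\geq 3$ none of these squares is $-2$ or of the form $-(p+2)$, the configuration is not a linear plumbing, and no blow-ups remain with which to repair it. So the step ``resolve the $ab$-fold intersection by blow-ups'' is the step that fails.

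What your own consistency check (Corollary~\ref{cor:C_p}.(1) at $b=3$) actually forces is the opposite order of operations. The long sphere of the $C_p$ in Figure~\ref{fig14} must have class $(a+b)h-(a-1)e_1-(b-1)e_2-2e_3-\cdots-2e_{ab+1}-e_{ab+2}$, of square $-(2ab-2a-2b-1)=-(p+2)$ with $p=2ab-2a-2b-3$ (giving $4a-9$ for $b=3$ and $6a-11$ for $b=4$, matching Question~\ref{question:C_p}). The fact that both $(a-1)e_1$ and $(b-1)e_2$ sit inside one class, with coefficients $-2$ on the new exceptional classes, tells you that the first move is a \emph{handle slide} of one positive sphere over the other, tubing them at one of the $ab$ intersection points into a single immersed sphere of class $(a+b)h-(a-1)e_1-(b-1)e_2$ with $ab-1$ double points; then come $ab-1$ blow-ups at these double points (each subtracts $2e_j$ from the one long handle), arranged in chained fashion so that the appropriate consecutive exceptional circles link one another and their proper transforms become the $-2$-spheres $e_j-e_{j+1}$ of the chain; and finally one blow-up at a point of the long sphere on the last exceptional sphere, contributing the $-e_{ab+2}$ and closing up $C_p$. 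This slide-first, chained-blow-up scheme is the content of Figures~\ref{fig10} and~\ref{fig11} in the $b=3$ case and of Figure~\ref{fig17} in general, and it is the idea missing from your proposal; the rest of your outline (the starting point, the count of $ab$ blow-ups, and the verification target) is correct, and indeed running your $b=3$ check would have exposed the discrepancy.
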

\begin{proof}
Slide handles and blow up as in Figure~\ref{fig17}.
\end{proof}
\begin{question}\label{question:C_p}
$(1)$ Does $\mathbf{CP}^2\# (3a+2)\overline{\mathbf{C}\mathbf{P}^2}$ $(3\leq a \leq 11)$ admit a handle decomposition as in Figure~\ref{fig15}?
\begin{figure}[h!]
\begin{center}
\includegraphics[width=3.5in]{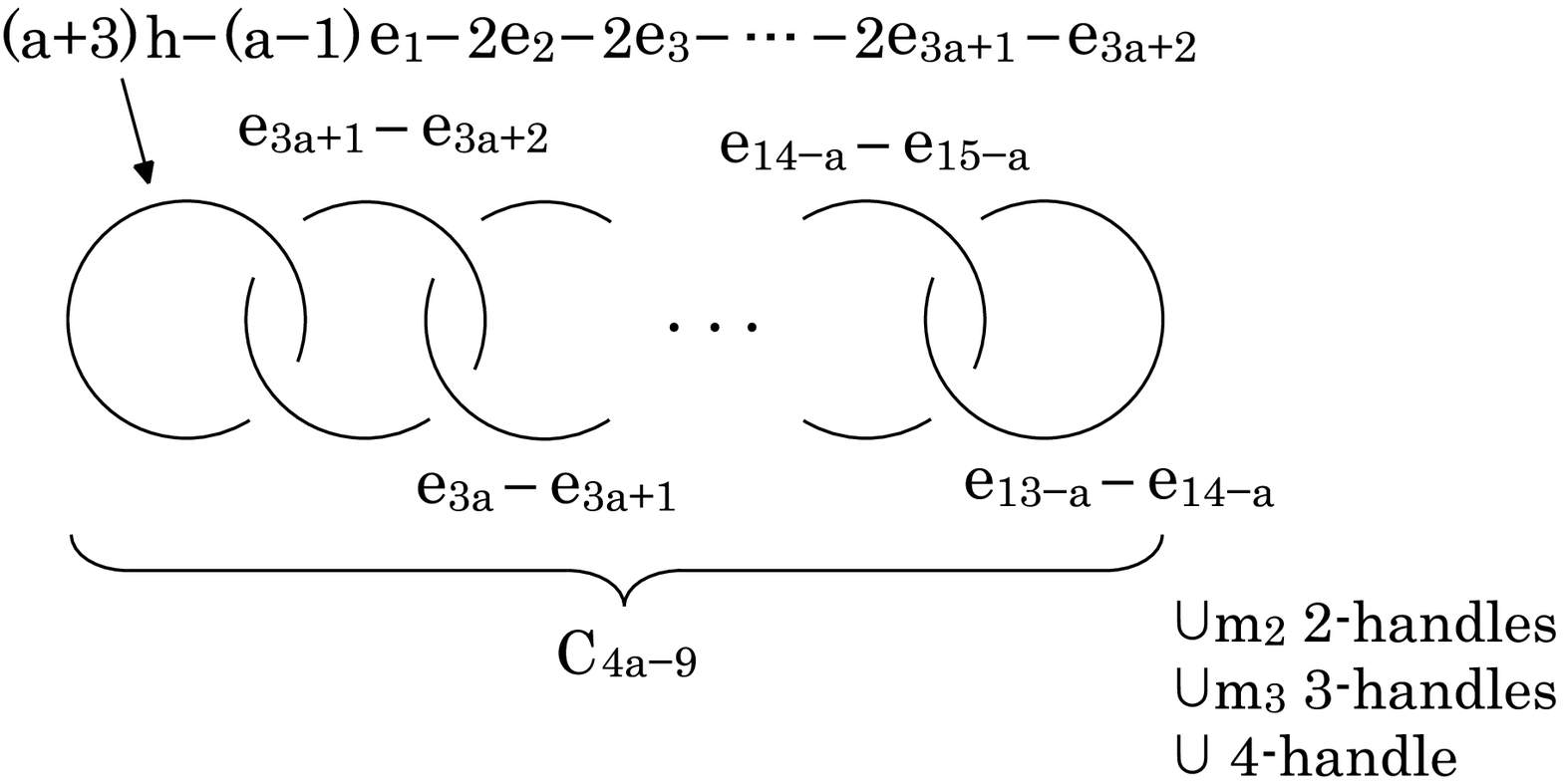}
\caption{}
\label{fig15}
\end{center}
\end{figure}\\
$(2)$ Does $\mathbf{CP}^2\# (4a+2)\overline{\mathbf{C}\mathbf{P}^2}$ $(3\leq a \leq 6)$ admit a handle decomposition as in Figure~\ref{fig16}?
\begin{figure}[h!]
\begin{center}
\includegraphics[width=3.5in]{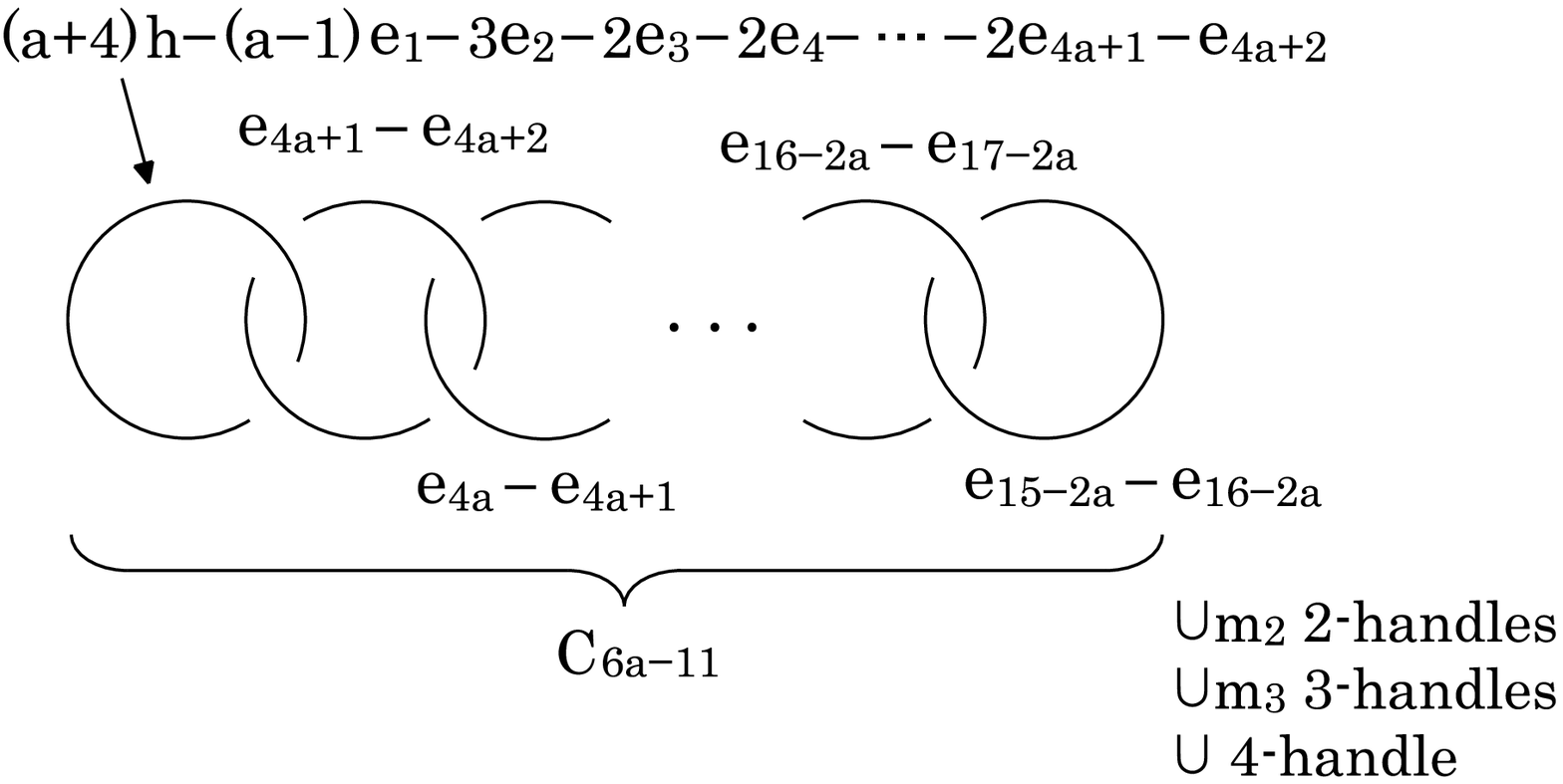}
\caption{}
\label{fig16}
\end{center}
\end{figure}
\end{question}
\begin{remark}
$(1)$ Proposition~\ref{prop:configuration from handle} shows that Question~\ref{question:C_p}.(1) (resp.\ (2)) is affirmative for $3\leq a\leq 10$ (resp.\ $3\leq a\leq 6$) if Question~\ref{question:handle} is affirmative for corresponding $a$ and $b$. \\
$(2)$ In Section~\ref{section:construction}, we proved that Question~\ref{question:C_p}.(1) is affirmative for $3\leq a\leq 7$. We can also prove that Question~\ref{question:C_p}.(2) is affirmative for $3\leq a\leq 4$, by appropriately changing the constructions in Section~\ref{section:construction}. 
\end{remark}
\begin{definition}Suppose that Question~$\ref{question:C_p}$ is affirmative. Let $X_{a,3}$ $(3\leq a\leq 11)$ be the rational blowdown of $\mathbf{CP}^2\# (3a+2)\overline{\mathbf{C}\mathbf{P}^2}$ along the copy of $C_{4a-9}$ in Figure~\ref{fig15}. Let $X_{a,4}$ $(3\leq a\leq 6)$ be the rational blowdown of $\mathbf{CP}^2\# (4a+2)\overline{\mathbf{C}\mathbf{P}^2}$ along the copy of $C_{6a-11}$ in Figure~\ref{fig16}. 
\end{definition}
\begin{proposition}\label{prop:final exotic}
Suppose that Question~$\ref{question:C_p}$ is affirmative. Then, \\
$(1)$ $X_{a,3}$ $(3\leq a \leq 11)$ is homeomorphic but not diffeomorphic to $\mathbf{CP}^2\# (12-a)\overline{\mathbf{C}\mathbf{P}^2}\:;$\\
$(2)$ $X_{a,4}$ $(3\leq a \leq 6)$ is homeomorphic but not diffeomorphic to $\mathbf{CP}^2\# (14-2a)\overline{\mathbf{C}\mathbf{P}^2}$. 
\end{proposition}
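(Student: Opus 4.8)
The plan is to run the two halves of the argument exactly as for $X_{a,3}$ and $X'_{a,3}$ in Section~\ref{section:construction} and Lemma~\ref{lem:5.1}: first fix the homeomorphism type, then detect the exotic smooth structure with Seiberg--Witten invariants. Throughout I assume Question~\ref{question:C_p} is affirmative, so that $\mathbf{CP}^2\#(3a+2)\overline{\mathbf{C}\mathbf{P}^2}$ (resp.\ $\mathbf{CP}^2\#(4a+2)\overline{\mathbf{C}\mathbf{P}^2}$) carries the handle decomposition of Figure~\ref{fig15} (resp.\ Figure~\ref{fig16}), containing the prescribed copy of $C_{4a-9}$ (resp.\ $C_{6a-11}$) with its homology classes $u_1,\dots,u_{p-1}$ read off from the figure.

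For the homeomorphism I would argue as in Proposition~\ref{prop:homeo}. Since $B_p$ carries a single $1$-handle, Figure~\ref{fig15} (resp.\ \ref{fig16}) shows that $X_{a,3}$ (resp.\ $X_{a,4}$) has a handle decomposition with exactly one $1$-handle, so $\pi_1$ is abelian. To kill $H_1$ I would now use Lemma~\ref{lem:simply connected}.(2) rather than (1): the chain classes $u_1,\dots,u_{p-2}$ are differences $e_j-e_{j+1}$ not involving $e_1$ (here $13-a\ge 2$ forces $a\le 11$), so $\delta:=h-e_1$ is orthogonal to all of them, while for $X_{a,3}$ one computes $\delta\cdot u_{p-1}=(a+3)-(a-1)=4$; since $p=4a-9$ is odd, $\gcd(4,p)=1$ and Lemma~\ref{lem:simply connected}.(2) gives $H_1=0$ (an analogous $\delta$ works for $X_{a,4}$). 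Rational blowdown preserves $b_2^+=1$ and deletes the $p-1$ negative classes of $C_p$, so $b_2^-=(3a+2)-(4a-10)=12-a$ for $X_{a,3}$ and $b_2^-=(4a+2)-(6a-12)=14-2a$ for $X_{a,4}$. Freedman's theorem then pins down the homeomorphism type once the form is known to be odd: for $2\le b_2^-\le 8$ there is no even unimodular form with $b_2^+=1$, and for $b_2^-=9$ (the case $a=3$ of $X_{a,3}$) Rochlin's theorem excludes $H\oplus(-E_8)$ because $\sigma=-8\not\equiv0\pmod{16}$.

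For the smooth non-triviality I would follow the proof of Lemma~\ref{lem:5.1} line by line. Take the characteristic element $\tilde K=PD(3h-e_1-\dots-e_{3a+2})$ (and the sign-adjusted analogue for $X_{a,4}$) and the class $H\in H_+^2$ orthogonal to $H_2(C_p;\mathbf{R})$ built as there. Using $SW_{R_n,PD(h)}\equiv0$ and applying the wall-crossing formula to $\pm\tilde K$, $H$ and $PD(h)$ yields $SW_{R_n,H}(\pm\tilde K)=\pm1$. A direct check gives $\tilde K(u_i)=0$ for $1\le i\le p-2$ and $\tilde K(u_{p-1})=\pm p$, so by Corollary~\ref{cor:4.3} the class $\tilde K$ is a lift of a characteristic element $K$ of $X_{a,3}$, and Theorem~\ref{nthm:4.1} transfers the value to $SW_{X_{a,3}}(\pm K)=\pm1$. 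Since $b_2^-\le 9$ throughout, $SW$ is a genuine diffeomorphism invariant, and as $SW_{\mathbf{CP}^2\#(12-a)\overline{\mathbf{C}\mathbf{P}^2}}\equiv0$ the two manifolds cannot be diffeomorphic; likewise for $X_{a,4}$.

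Within this conditional proof the one genuinely new wrinkle is the homeomorphism type at the extreme value $b_2^-=1$, i.e.\ $a=11$ for $X_{a,3}$: there $\sigma=0$, Rochlin no longer separates $\mathbf{CP}^2\#\overline{\mathbf{C}\mathbf{P}^2}$ from $S^2\times S^2$, and one must instead exhibit a surviving class of odd self-intersection in the orthogonal complement of $\langle u_1,\dots,u_{p-1}\rangle$ to force the form to be odd. Everything else is a near-verbatim transcription of Section~\ref{section:construction} and Lemma~\ref{lem:5.1}, the only arithmetic input being $\tilde K(u_{p-1})=\pm p$ for the enlarged range. I therefore expect no serious difficulty in the proposition itself; the real obstacle is that the whole statement rests on Question~\ref{question:C_p}, equivalently on the Kirby calculus of Question~\ref{question:handle}, which is precisely the step the author was unable to carry out for small~$n$.
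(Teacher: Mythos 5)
Your proposal runs along essentially the same lines as the paper's proof: simple connectivity via Lemma~\ref{lem:simply connected}.(2) (the remark following that lemma confirms part (2) is exactly what the paper uses here), the same Betti number count, oddness of the intersection form via Rochlin's theorem and the classification of even indefinite unimodular forms, Freedman's theorem, and a rerun of the proof of Lemma~\ref{lem:5.1}.(1) for the Seiberg--Witten obstruction; all of these steps are set up correctly, and your explicit choice $\delta=h-e_1$ with $\delta\cdot u_{p-1}=4$, $\gcd(4,4a-9)=1$ is a valid instantiation of the lemma. The one place where you stop short of a proof is the step you yourself flag as the ``genuinely new wrinkle'': for $a=11$, where $b_2^-=1$ and $\sigma=0$, you state that one must exhibit a surviving class of odd self-intersection but never produce one, so your argument distinguishing $\mathbf{CP}^2\#\overline{\mathbf{C}\mathbf{P}^2}$ from $S^2\times S^2$ is left incomplete, and this case is part of the statement being proved. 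The paper fills precisely this hole: it takes $v=(8a-1)h-2(a+3)e_1-(a+3)e_2-\dots-(a+3)e_{3a+2}$, checks that $v$ is orthogonal to the subspace $H_2(C_{4a-9};\mathbf{Z})$, invokes \cite[Remark~5.2.(1)]{Y2} to regard $v$ as an element of $H_2(\mathbf{CP}^2\#(3a+2)\overline{\mathbf{C}\mathbf{P}^2}-C_{4a-9};\mathbf{Z})\subset H_2(X_{11,3};\mathbf{Z})$, and computes $v^2=121$, which is odd. Note that $v$ is exactly the class whose Poincar\'{e} dual you already use as the chamber $H_{a,3}$ in the wall-crossing step (its orthogonality to $u_1,\dots,u_{p-1}$ is what Theorem~\ref{nthm:4.1} demands), so the missing verification amounts to a single self-intersection computation with a class already in your hands --- but as written, the $a=11$ case of part (1) is asserted rather than proved.
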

\begin{proof}
Lemma~\ref{lem:simply connected} implies that $X_{a,3}$ and $X_{a,4}$ are simply connected. One can easily check that $b^+_2(X_{a,3})=b^+_2(X_{a,4})=1$, $b^-_2(X_{a,3})=12-a$ and $b^-_2(X_{a,4})=14-2a$. Rochlin's theorem shows that the intersection forms of $X_{a,3}$ $(a\neq 11)$ and $X_{a,4}$ are odd. We can also prove that the intersection form of $X_{a,3}$ $(a=11)$ is odd as follows. Let $v$ be the element of $H_2(\mathbf{CP}^2\# (3a+2)\overline{\mathbf{C}\mathbf{P}^2};\mathbf{Z})$ defined by $v=(8a-1)h-2(a+3)e_1-(a+3)e_2-(a+3)e_3-\dots-(a+3)e_{3a+2}$. Then $v$ is orthogonal to the subspace $H_2(C_{4a-9};\mathbf{Z})$ of $H_2(\mathbf{CP}^2\# (3a+2)\overline{\mathbf{C}\mathbf{P}^2};\mathbf{Z})$. We can, similarly to \cite[Remark~5.2.(1)]{Y2}, view $v$ as an element of $H_2(\mathbf{CP}^2\# (3a+2)\overline{\mathbf{C}\mathbf{P}^2}-C_{4a-9};\mathbf{Z})\subset H_2(X_{a,3};\mathbf{Z})$. Since $v^2=121$ when $a=11$, the intersection form of $X_{11,3}$ is odd. Freedman's theorem thus shows that $X_{a,3}$ (resp.\ $X_{a,4}$) is homeomorphic to $\mathbf{CP}^2\# (12-a)\overline{\mathbf{C}\mathbf{P}^2}$ (resp.\ $\mathbf{CP}^2\# (14-2a)\overline{\mathbf{C}\mathbf{P}^2}$). 

We can easily prove that $X_{a,3}$ and $X_{a,4}$ are not diffeomorphic to rational surfaces by computing the Seiberg-Witten invariants, similarly to the proof of Lemma~\ref{lem:5.1}.(1). 
\end{proof}
\begin{remark}
(1) In Question~\ref{question:handle} and \ref{question:C_p}, we assumed that there is no 1-handle in the handle decompositions. This is because we used this assumption to prove that $X_{a,3}$ and $X_{a,4}$ are simply connected. However, the author does not know if we need this assumption. \\
(2) In Section~\ref{section:construction}, we constructed $X_{a,3}$ according to the strategy in this section, though we could not solve Question~\ref{question:handle} for corresponding $a$ and $b$. By modifying the construction of $X_{a,3}$, we constructed $X'_{a,3}$ so that $X'_{a,3}$ naturally has no 3-handles. We can also construct an exotic $\mathbf{CP}^2\# 6\overline{\mathbf{C}\mathbf{P}^2}$ which corresponds to $X_{4,4}$.
\end{remark}
\begin{figure}[h!]
\begin{center}
\includegraphics[width=4.7in]{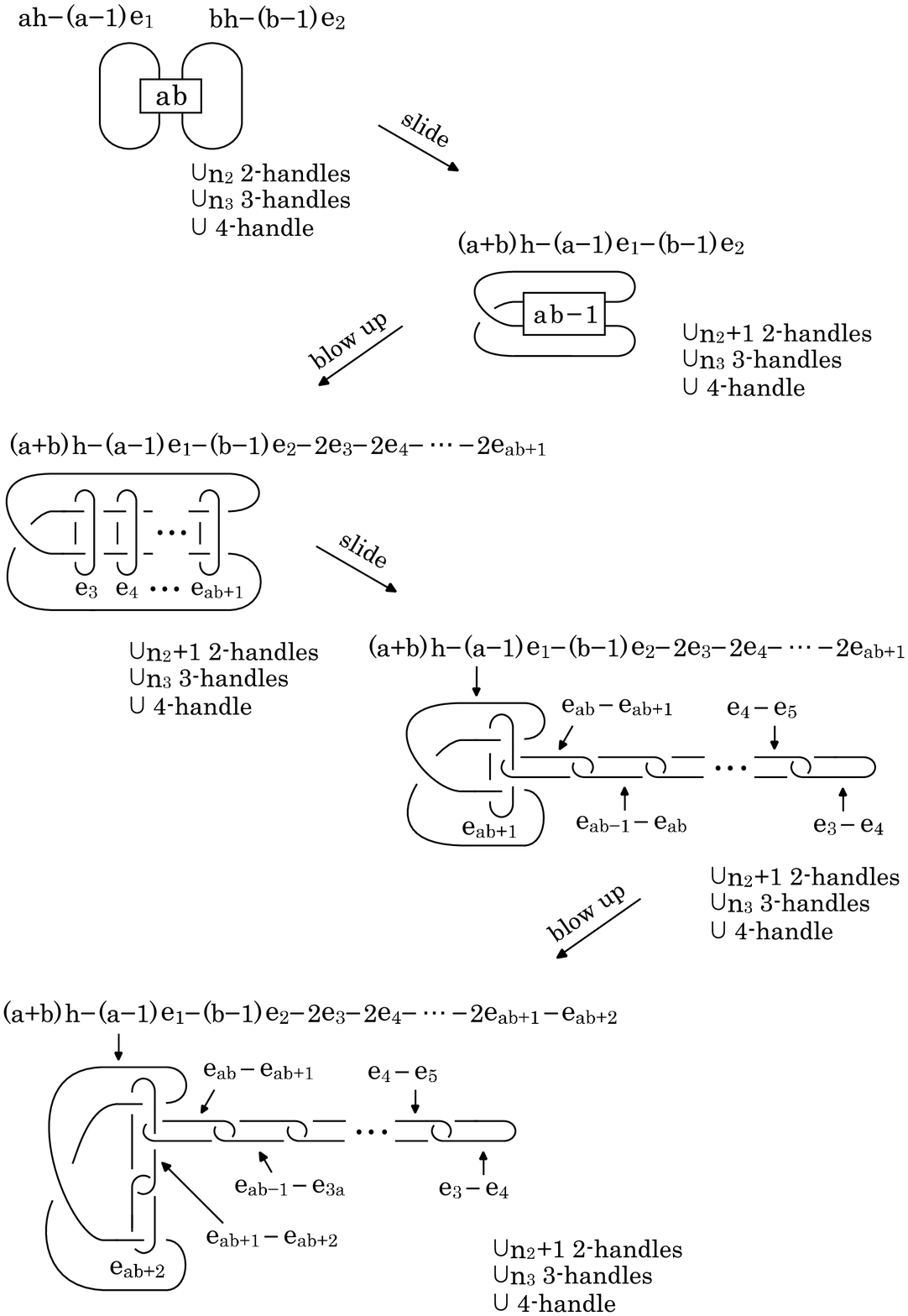}
\caption{}
\label{fig17}
\end{center}
\end{figure}

\end{document}